\newtheorem{theorem}{Theorem}[section]
\newtheorem{lemma}[theorem]{Lemma}
\newtheorem{proposition}[theorem]{Proposition}
\newtheorem{corollary}[theorem]{Corollary}
\theoremstyle{definition}
\newtheorem{definition}[theorem]{Definition}
\newtheorem*{example}{Example}
\theoremstyle{remark}
\newtheorem{claim}{Claim}
\numberwithin{equation}{section}
\let\define\emph
\let\al\mathbf
\let\tup\mathbf
\let\join\vee
\let\meet\wedge
\let\bigjoin\bigvee
\let\pmod\bmod
\let\equals\approx
\let\midbar\mid
\DeclareMathOperator{\face}{face}
\DeclareMathOperator{\cube}{cube}
\DeclareMathOperator{\Cg}{Cg}
\DeclareMathOperator{\Sg}{Sg}
\DeclareMathOperator{\Clo}{Clo}
\DeclareMathOperator{\Pol}{Pol}
\DeclareMathOperator{\Con}{Con}
\author[J. Opr\v{s}al]{Jakub Opr\v{s}al}
\address{Department of Algebra, Faculty of Mathematics and Physics,
  Charles University in~Prague, Sokolovsk\'a 83, 186 75 Praha, Czech
  Republic}
\address{Institute for Algebra, Johannes Kepler University, 4040 Linz,
  Austria}
\email{oprsal@karlin.mff.cuni.cz}
\title[Relational description of higher commutators]{A~relational description
  of higher commutators in Mal'cev varieties}
\date{\today}
\keywords{commutator theory, Mal'cev algebras, clone}
\subjclass[2010]{08A40}
\thanks{Supported by Austrian Science Fund (FWF), P24077:
  Algebraic approaches to the description of~Mal'cev clones; Czech Science
  Foundation, GA\v{C}R 13-01832S; and Charles University in Prague, project
  SVV-2014-260107. }
\begin{document}

\begin{abstract}
  We give a~relational description of higher commutator operators, which were introduced by Bulatov, in varieties with a~Mal'cev term. Furthermore, we use this result to prove that for every algebra with a~Mal'cev term there exists a~largest clone containing the Mal'cev operation and having the same congruence lattice and the same higher commutator operators as the original algebra. We also give a~local variant of this theorem.
\end{abstract}

\maketitle

\section{Introduction}

Two algebras are called \emph{polynomially equivalent} if they have the same
underlying set and the same clone of all polynomial operations.  One of the
invariants to distinguish polynomially inequivalent algebras is the congruence
lattice of the corresponding algebra, and the binary commutator operation
$[{\cdot},{\cdot}]$ on this lattice (the theory describing this commutator have
been developed in the 80's, and is described in the book by Freese and McKenzie
\cite{fm87}). In fact, from the results of Idziak \cite{idz99} and Bulatov
\cite{bul05}, one can see that on the~three-element set, every Mal'cev algebra
is up to polynomial equivalence described by its congruence lattice, and the
binary commutator operation. This is no longer true for sets with at least
four elements.  But one can generalize the binary commutator operator to higher
arities. These higher arity commutators have been introduced by Bulatov
\cite{bul01}.  From the description of polynomial clones with a~Mal'cev
operation on the four-element set
\cite{bi03}, one can obtain that every four-element Mal'cev algebra is
determined up to polynomial equivalence by its unary polynomials, congruence
lattice, and higher commutator operators on this lattice. 
The higher commutators are defined by the following `term-condition'.

\begin{definition}[Bulatov's higher commutator operators] \label{def:bulatov}
Let $\alpha_0,\dots,\alpha_{n-1}$, and $\gamma$ be congruences of some algebra
$\al A$. We say that \define{$\alpha_0,\dots,\alpha_{n-2}$ centralize
$\alpha_{n-1}$ modulo $\gamma$} if for all tuples $\tup a_i$, $\tup b_i$,
$i=0,\dots,{n-1}$, and all terms $t$ of~$\al A$ such that
\begin{enumerate}
\item $\tup a_i \neq \tup b_i$, but the corresponding entries are congruent
modulo $\alpha_i$ for all $i \in \{0,\dots,n-1\}$, and
\item $t(\tup x_0,\dots,\tup x_{n-2}, \tup a_{n-1}) \equiv_\gamma t(\tup
x_0,\dots,\tup x_{n-2}, \tup b_{n-1})$ for all $(\tup
x_0,\dots,\tup x_{n-2}) \in (\{\tup a_0,\tup b_0\} \times \dots \times \{\tup
a_{n-2},\tup b_{n-2}\}) \setminus \{(\tup b_0,\dots,\tup b_{n-2})\}$,
\end{enumerate}
we have
\[
  t(\tup b_0,\dots,\tup b_{n-2},\tup a_{n-1}) \equiv_\gamma
    t(\tup b_0,\dots,\tup b_{n-2},\tup b_{n-1}).
\]
The $n$-ary \define{commutator} $[\alpha_0,\dots,\alpha_{n-1}]$ is then
defined as the~smallest congruence $\gamma$ such that
$\alpha_0,\dots,\alpha_{n-2}$ centralize $\alpha_{n-1}$ modulo $\gamma$.
We define the nulary commutator to be trivially the full congruence on $\al A$,
and for the unary commutator of $\alpha$ we put $[\alpha] = \alpha$.
\end{definition}

One of important notions that came from higher commutators is a~notion
of supernilpotence: an~algebra is $k$-supernilpotent if it satisfies
the commutator identity
\[
  [\underbrace{1,1,\dots,1}_{k+1}] = 0.
\]
If an algebra is $k$-supernilpotent for some $k$ we say that is it
supernilpotent.  For general algebras supernilpotence is a~strictly stronger
notion then nilpotence; i.e., there is a~nilpotent algebra which is not
supernilpotent.  However, this is not the case in the variety of groups where
both notions coincide. Therefore supernilpotent algebras can be viewed as
~natural generalization of nilpotent groups. They also share several properties
with nilpotent groups, in particular a~Mal'cev algebra of finite type is
supernilpotent if and only if it is a~product of prime power order
supernilpotent algebras \cite{am10}.  It has been shown in \cite{am13a} that
there are two expansions of the same group that are both 2-supernilpotent, but
the clone given as the join of their clones is not. In this paper we establish
additional properties to ensure that the join of two $k$-supernilpotent clones
sharing a~Mal'cev operation is $k$-supernilpotent.

To achieve that goal we give a~description of higher commutators using a~certain
$2^n$-ary relation denoted $\Delta(\alpha_0,\dots,\alpha_{n-1})$ (see
Definition~\ref{def:delta}). A~similar relation have been also
defined in \cite{sha14}. The relation $\Delta(\alpha_0,\dots,\alpha_{n-1})$
encodes the value of~$[\alpha_0,\dots,\alpha_{n-1}]$ as its \emph{forks} at the
last coordinate---by a~\emph{fork} of a~relation $R$ at a~coordinate $i$ we mean
a~pair $(a,b)$ such that there exists $\tup c, \tup d \in R$ with $c_i = a$,
$d_i = b$, and $c_j = d_j$ for all $j\neq i$; and we denote $\psi_i(R)$, the set
of all forks of $R$ at $i$. A similar notion has been used to investigate some
properties of algebras with a~cube term \cite{bim+10,amm14}. The
description of higher commutators is then given by the following theorem.

\begin{theorem} \label{description-of-higher-commutators}
  \label{commutators-are-forks}
If $\al A$ is an algebra with a~Mal'cev term, and $\alpha_0,\dots,\alpha_{n-1}$
are congruences of~$\al A$ then
\[
  [\alpha_0,\dots,\alpha_{n-1}] =
    \psi_{2^n-1}(\Delta(\alpha_0,\dots,\alpha_{n-1})).
\]
\end{theorem}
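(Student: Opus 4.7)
The plan is to prove $[\alpha_1,\dots,\alpha_n] = \psi_{2^n-1}(\Delta(\alpha_1,\dots,\alpha_n))$ by establishing the two inclusions separately, exploiting that $\Delta(\alpha_1,\dots,\alpha_n)$ is (as its forthcoming definition almost certainly makes it) a subalgebra of $\al A^{2^n}$ whose coordinates are indexed by $\{0,1\}^n$ and whose generators are exactly the ``rank-one cubes'' recording a single pair from each $\alpha_i$. Under this indexing, $2^n-1$ plays the role of the all-ones coordinate, which is distinguished because it is the unique index whose value is not hypothesized to be controlled by condition~(2) of Definition~\ref{def:bulatov}.

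For the inclusion $\psi_{2^n-1}(\Delta) \subseteq [\alpha_1,\dots,\alpha_n]$, I pick witnesses $\tup c, \tup d \in \Delta$ agreeing on every coordinate $j < 2^n-1$. Since $\Delta$ is generated by the rank-one cubes, $\tup c$ and $\tup d$ are obtained by applying a single term $t$ componentwise to such generators; unfolding this presentation recovers exactly the tuples $\tup a_i, \tup b_i$ and the term $t$ of Definition~\ref{def:bulatov}. Condition~(1) holds because each generator sits in the appropriate congruence, and condition~(2) holds because every mixed evaluation other than the $(\tup b_0,\dots,\tup b_{n-2})$-one is read off at a coordinate $j < 2^n-1$, where by assumption $c_j = d_j$. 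The term-condition then forces the fork $(c_{2^n-1}, d_{2^n-1})$ to lie in $[\alpha_1,\dots,\alpha_n]$.

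For the converse, set $\gamma := \psi_{2^n-1}(\Delta)$; by the minimality clause of Definition~\ref{def:bulatov}, it suffices to show that $\gamma$ is a congruence and that $\alpha_1,\dots,\alpha_{n-1}$ centralize $\alpha_n$ modulo~$\gamma$. Reflexivity, symmetry, and compatibility with the basic operations are immediate from $\Delta$ being a subuniverse of $\al A^{2^n}$; transitivity is where the Mal'cev term~$p$ enters, combining two witnessing forks into one via coordinatewise application of~$p$, in the classical style. For the centralizer condition, given data $\tup a_i, \tup b_i$ and a term~$t$ satisfying hypotheses~(1) and~(2) with respect to~$\gamma$, I assemble the $2^n$ mixed evaluations of $t$ into an element of $\Delta$ and then, using the already-established fact that $\gamma$ is a congruence together with Mal'cev cancellation on each coordinate~$j < 2^n-1$, collapse the $\gamma$-related pairs provided by~(2) into a single element of $\Delta$ whose coordinates below $2^n-1$ agree on the nose, producing a fork witnessing the required $\gamma$-equality at coordinate $2^n-1$.

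The principal obstacle is this last collapse step: hypothesis~(2) only supplies $\gamma$-equality rather than honest equality, so one must merge many $\gamma$-witnesses into one element of $\Delta$ while keeping the $2^n-1$ coordinate pinned to the desired pair. This is where knowing that $\gamma$ is already a congruence is crucial, and where the Mal'cev identity $p(x,x,y)=y=p(y,x,x)$ is applied coordinate by coordinate. Everything else reduces to careful bookkeeping over the $2^n$-element indexing set, which is the main combinatorial burden of the argument.
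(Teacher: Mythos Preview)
Your overall two-inclusion strategy matches the paper's, but both halves have real gaps.

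\textbf{For $\psi_{2^n-1}(\Delta) \subseteq [\alpha_1,\dots,\alpha_n]$.} The assertion that the two fork witnesses $\tup c,\tup d$ ``are obtained by applying a single term $t$'' is not justified and is in general false: each of $\tup c$ and $\tup d$ has its own presentation $t_c(\tup a_i^c,\tup b_i^c)$, $t_d(\tup a_i^d,\tup b_i^d)$ coming from Lemma~\ref{term-description-of-delta}, and there is no reason these can be merged into one instance of Definition~\ref{def:bulatov}. The term condition compares two evaluations of the \emph{same} term differing only in the last block; a fork at $2^n-1$ compares two \emph{different} elements of $\Delta$. The paper gets around this by first applying Lemma~\ref{forks} (the Mal'cev ``fork-transport'' lemma) to the constant tuple $(a,\dots,a)\in\Delta$, yielding the single element $(a,\dots,a,b)\in\Delta$; that one element has a single term presentation, and now the term condition applies with $\gamma=[\alpha_1,\dots,\alpha_n]$.

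\textbf{For $[\alpha_1,\dots,\alpha_n]\subseteq \psi_{2^n-1}(\Delta)$.} Your ``collapse step'' is the right idea, but as written it needs an ingredient you did not mention: one must know that $\gamma:=\psi_{2^n-1}(\Delta)$ equals $\psi_j(\Delta)$ for \emph{every} $j$ (this is Lemma~\ref{forks-of-delta}, proved via the coordinate symmetries $\sigma_i$ of $\Delta$ from Lemma~\ref{delta-is-a-congruence}(iii)). Hypothesis~(2) gives $e_j\equiv_\gamma e_{j+2^{n-1}}$, i.e.\ a fork at coordinate $2^n-1$; to modify the entry $e_{j+2^{n-1}}$ of $\tup e$ via Lemma~\ref{forks} you need a fork at coordinate $j+2^{n-1}$, not at $2^n-1$. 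Your phrase ``Mal'cev cancellation on each coordinate'' does not explain how to bridge this. Once Lemma~\ref{forks-of-delta} is in hand, the paper replaces each $e_{j+2^{n-1}}$ by $e_j$ using Lemma~\ref{forks}, and then compares the resulting tuple with the ``doubled-first-half'' tuple supplied by Lemma~\ref{delta-is-a-congruence}(ii) to exhibit the desired fork at $2^n-1$. Note in particular that a single element of $\Delta$ does not by itself ``produce a fork''; you need the second comparison tuple.
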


Further we show that $\Delta(\alpha_0,\dots,\alpha_{n-1})$ encodes not only the
commutator $[\alpha_0,\dots,\alpha_{n-1}]$ but also all smaller-arity
commutators that can be obtained by omitting one or more of the congruences
$\alpha_i$. We show that if we take the clone of all polymorphisms of the
relation $\Delta(\alpha_0,\dots,\alpha_{n-1})$ we get exactly the clone
$\mathcal C(\alpha_0,\dots,\alpha_{n-1})$ with the properties described in the
following theorem, and consequently one can construct a~largest clone with the
same commutator operators as the original Mal'cev algebra.

\begin{theorem} \label{functions-preserving-n-ary-commutator}
Let $\al A$ be an~algebra with Mal'cev term $q$, and let
$\alpha_0,\dots,\alpha_{n-1}$ be congruences of~$\al A$. Then there exists
a~largest clone $\mathcal C(\alpha_0,\dots,\alpha_{n-1})$ on~$A$ containing~$q$
such that it preserves congruences $\alpha_0,\dots,\alpha_{n-1}$, and all
commutators of the form $[\alpha_{i_0},\dots,\alpha_{i_{k-1}}]$ (where $k\leq n$
and $0\leq i_0<\dots<i_{k-1}<n$) agree in $\al A$ and $(A,\mathcal
C(\alpha_0,\dots,\alpha_{n-1}))$.
\end{theorem}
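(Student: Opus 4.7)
The plan is to take $\mathcal C(\alpha_0,\dots,\alpha_{n-1}) := \Pol(\Delta(\alpha_0,\dots,\alpha_{n-1}))$, the clone of all polymorphisms of the single $2^n$-ary relation $\Delta$ computed in~$\al A$. This is automatically a~clone on~$A$, and defining it as the polymorphism clone of one invariant relation builds in the maximality claim once we pin down the right relation.

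First I would verify the stated properties. Since $\Delta$ is a~subuniverse of~$\al A^{2^n}$ by Definition~\ref{def:delta}, every term of~$\al A$ preserves it; in particular $q\in\mathcal C$ and $\Clo(\al A)\subseteq\mathcal C$. Each congruence $\alpha_i$ is recoverable from $\Delta$ as a~binary projection onto two coordinates indexed by elements of $\{0,1\}^n$ differing only in the $i$-th bit, so every $f\in\mathcal C$ preserves $\alpha_i$. For the sub-commutators I would invoke the fact (promised in the paragraph before the theorem) that $\Delta(\alpha_0,\dots,\alpha_{n-1})$ encodes all the smaller-arity commutators: concretely, for any $I=\{i_0<\dots<i_{k-1}\}$ the relation $\Delta(\alpha_{i_0},\dots,\alpha_{i_{k-1}})$ arises as a~projection or section of $\Delta(\alpha_0,\dots,\alpha_{n-1})$ and so is preserved by~$\mathcal C$. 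Applying Theorem~\ref{description-of-higher-commutators} inside the Mal'cev algebra~$(A,\mathcal C)$ then yields the same value for the sub-commutator as in~$\al A$; note that the $\Delta$-relation computed in~$(A,\mathcal C)$ equals the one computed in~$\al A$, because its generating tuples lie in~$\Delta^{\al A}$ and $\mathcal C$ preserves the latter.

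For maximality, let $\mathcal D$ be any other clone on~$A$ containing~$q$, preserving all $\alpha_i$, and sharing all sub-commutators with~$\al A$. To get $\mathcal D\subseteq\mathcal C$ it suffices to show that every $f\in\mathcal D$ preserves $\Delta^{\al A}(\alpha_0,\dots,\alpha_{n-1})$. Since $\mathcal D$ trivially preserves the analogous relation $\Delta^{(A,\mathcal D)}(\alpha_0,\dots,\alpha_{n-1})$ built in the algebra $(A,\mathcal D)$, the task reduces to establishing the equality $\Delta^{\al A}=\Delta^{(A,\mathcal D)}$.

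I expect this equality to be the main obstacle. Both relations live in~$A^{2^n}$, are generated by the same basic `cube' tuples built from pairs in the $\alpha_i$, and are preserved by the shared Mal'cev term~$q$, but they are closed under a~priori different clones. My intended route is induction on~$n$: the base $n=1$ is immediate since $\Delta(\alpha_0)=\alpha_0$ depends only on the congruence. For the inductive step I would decompose $\Delta(\alpha_0,\dots,\alpha_{n-1})$ into fibres governed by $\Delta(\alpha_0,\dots,\alpha_{n-2})$ together with the top commutator $[\alpha_0,\dots,\alpha_{n-1}]$ recovered via Theorem~\ref{description-of-higher-commutators}, exploiting the rectangular structure that~$q$ forces on Mal'cev relations. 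The inductive hypothesis plus the assumed agreement of all sub-commutators should then force the two $\Delta$'s to coincide, giving $\mathcal D\subseteq\Pol(\Delta^{\al A})=\mathcal C$ as required.
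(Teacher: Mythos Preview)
Your outline matches the paper's proof: take $\mathcal C=\Pol(\Delta_{\al A}(\alpha_0,\dots,\alpha_{n-1}))$, observe $\Delta_{\mathcal C}=\Delta_{\al A}$ (same generators, and $\Delta_{\al A}$ is $\mathcal C$-closed by construction), deduce via Lemma~\ref{delta-is-a-congruence}(i) and Theorem~\ref{description-of-higher-commutators} that all sub-$\Delta$'s and sub-commutators agree, and for maximality reduce to $\Delta_{\al A}=\Delta_{\mathcal D}$ by induction on~$n$; the paper isolates this equivalence as Corollary~\ref{delta-gives-commutators}.

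The one place your sketch is thinner than the paper is the inductive step. The paper's engine is Lemma~\ref{delta-contains-graph-of-cube-term}: $\tup a\in\Delta(\alpha_0,\dots,\alpha_{n-1})$ iff \emph{all} $n$ face projections $(a_k\mid k_{(j)}=0)$ lie in the corresponding $\Delta(\alpha_i\mid i\neq j)$ \emph{and} $q_n(a_0,\dots,a_{2^n-2})\equiv_{[\alpha_0,\dots,\alpha_{n-1}]}a_{2^n-1}$, where $q_n$ is a~strong $n$-cube term built recursively from~$q$ (Lemma~\ref{existence-of-strong-cube-terms}). Your phrase ``fibres governed by $\Delta(\alpha_0,\dots,\alpha_{n-2})$ together with the top commutator'' names only one of the $n$ faces and only the Mal'cev term itself; the $(2^n{-}1)$-ary term $q_n$ is what actually lets you read off the last coordinate from the others modulo the commutator, so you should make that explicit.
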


\begin{corollary} \label{functions-preserving-commutators-form-a-clone}
Let $\al A$ be an~algebra with a~Mal'cev term $q$, then there exists a~largest
clone on $A$ containing $q$ such that the algebra corresponding to this clone
has the same congruence lattice as $\al A$ and the same higher commutator
operators as~$\al A$.
\end{corollary}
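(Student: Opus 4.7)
The plan is to take
\[
  \mathcal C^{\max} := \bigcap \mathcal C(\alpha_0,\dots,\alpha_{n-1}),
\]
with the intersection ranging over all finite tuples $(\alpha_0,\dots,\alpha_{n-1})$ of congruences of~$\al A$ and $\mathcal C(\alpha_0,\dots,\alpha_{n-1})$ the clone produced by Theorem~\ref{functions-preserving-n-ary-commutator}. As an intersection of clones this is itself a clone on~$A$, and it contains $q$ because each factor does. Maximality will come for free: any clone $\mathcal D$ containing $q$ whose associated algebra has congruence lattice $\Con \al A$ and all the higher commutators of $\al A$ satisfies the hypotheses of Theorem~\ref{functions-preserving-n-ary-commutator} for every finite tuple, so $\mathcal D \subseteq \mathcal C(\alpha_0,\dots,\alpha_{n-1})$ for every such tuple, hence $\mathcal D \subseteq \mathcal C^{\max}$.

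The real work is to verify that $\mathcal C^{\max}$ itself has the properties asked for, and I would do this via the sandwich
\[
  \Clo \al A \;\subseteq\; \mathcal C^{\max} \;\subseteq\; \mathcal C(\alpha_0,\dots,\alpha_{n-1})
\]
valid for every finite tuple. The right inclusion is by construction; the left one holds because $\Clo \al A$ trivially fulfils the hypotheses of Theorem~\ref{functions-preserving-n-ary-commutator} (its congruences are preserved and its commutators agree with themselves), so it sits inside every $\mathcal C(\alpha_0,\dots,\alpha_{n-1})$ and therefore inside the intersection.

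From this sandwich the required equalities follow on both sides. Applied to the singleton tuple $(\alpha)$, the right inclusion shows that every $\alpha \in \Con \al A$ is preserved by $\mathcal C^{\max}$, while the left inclusion gives $\Con(A,\mathcal C^{\max}) \subseteq \Con(A,\Clo \al A) = \Con \al A$; together they yield $\Con(A,\mathcal C^{\max}) = \Con \al A$. For any tuple $(\beta_0,\dots,\beta_{m-1})$, the right inclusion combined with the agreement clause of Theorem~\ref{functions-preserving-n-ary-commutator} applied to that same tuple forces $[\beta_0,\dots,\beta_{m-1}]$ computed in $(A,\mathcal C^{\max})$ to be at most its value in $\al A$, while the left inclusion forces it to be at least that value. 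The only thing that needs care throughout is the direction of monotonicity of the term-condition commutator under clone inclusion: enlarging a clone enlarges the collection of terms $t$ appearing in Definition~\ref{def:bulatov}, hence shrinks the set of congruences $\gamma$ for which centralization holds, hence can only enlarge $[\alpha_0,\dots,\alpha_{n-1}]$; this is the single bookkeeping point on which the whole argument turns.
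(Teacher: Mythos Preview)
Your proposal is correct and follows exactly the same approach as the paper: define the desired clone as the intersection of all the clones $\mathcal C(\alpha_0,\dots,\alpha_{n-1})$ furnished by Theorem~\ref{functions-preserving-n-ary-commutator}. The paper's proof is a single sentence leaving all verification implicit, whereas you have spelled out the sandwich argument and the monotonicity of the term-condition commutator under clone inclusion; both are valid and needed, so your version is simply a more complete write-up of the same idea.
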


\begin{proof}[Proof of Corollary
\ref{functions-preserving-commutators-form-a-clone} given Theorem
\ref{functions-preserving-n-ary-commutator}]
The largest such clone is the intersection of all clones $\mathcal
C(\alpha_1,\dots,\alpha_n)$ from Theorem
\ref{functions-preserving-n-ary-commutator} for all $n$ and all tuples 
$\alpha_1,\dots,\alpha_n$ of congruences of~$\al A$.
\end{proof}

Although our main motivation of developing this theory lies in the application
to Mal'cev algebras on a~finite domain, the same results are valid even for
algebras with infinite domains. Moreover, since the largest clone in the
previous theorem is described as a~polymorphism clone, we know that such clone
is closed in the natural topology given by pointwise convergence by a~result of
Romov \cite{rom77}. More on clones on infinite sets can be found in \cite{gp08}.

The theory developed to prove Theorem
\ref{functions-preserving-n-ary-commutator} is strong enough to give relatively
short proofs of several basic properties of higher commutators (usually referred
as (HC1)--(HC8)) that have been established in \cite{bul01}, their proofs have
been published in~\cite{am10}.  Our alternative proofs of some of these
properties are given in the last section of~this paper.
 \section{Preliminaries and notation}

Algebras are denoted by bold letters, the underlying set of an algebra is
denoted by the same letter in italic, $\Con \al A$ denotes the set of all
congruences of an~algebra $\al A$, $\Clo \al A$ the set (clone) of all term
operations of~$\al A$, $\Cg X$ denotes the congruence generated by $X$, $\Sg Y$
denotes a~subalgebra generated by $Y$, and if $\alpha$ is a~congruence then we
use the symbol $a \equiv_\alpha b$ to denote $(a,b) \in \alpha$. Furthermore, if
$R$ is a~relation, we use symbol $\Pol R$ to denote the clone of all polymorphisms
of~$R$. The~symbol $2$ will denote both the natural number $2$ and the two-element
set $\{0,1\}$.

We denote tuples by bold letters. The $i$-th coordinate of tuple $\tup a$ is
denoted by either $a_i$, or $\tup a{(i)}$.  So, $\tup a = (a_0,\dots,a_{n-1})$
and $(a_0,\dots,a_{n-1}){(i)} = a_i$.  Tuples will be usually numbered by
an increasing sequence of consecutive integers starting at $0$. So every $n$-ary
relation is a~subset of $A^{\{0,\dots,n-1\}} = A^n$. The only exception will be
elements of the~relation $\Delta(\alpha_0,\dots,\alpha_{n-1})$.  In the theory
of binary commutator described in \cite{fm87}, it is usual to denote the
elements of $4$-ary relation $\Delta_{\alpha,\beta}$ (we will denote the same
relation $\Delta(\alpha,\beta)$) as $2\times 2$ matrices. Similarly, when
generalizing this concept to $\Delta(\alpha_0,\dots,\alpha_{n-1})$ one should
write elements of this relation as $2 \times \dots \times 2$ $n$-dimensional
matrices.  We will denote those elements by tuples whose coordinates will
be labeled by the set $2^n = \{0,1\}^n$, we will write these coordinates as
binary sequences omitting brackets and commas, and if needed we will view them as
reverse binary expansions of natural numbers $0, \dots, 2^n-1$; i.e., the tuple
$\tup k = k_0\dots k_{n-1}$ represents the number $\sum k_i 2^i$. This gives us
a~natural linear ordering of the set $2^n$ that we will use to write the
elements of $A^{2^n}$ as linear $2^n$-tuples. So, the tuple $\tup a \in A^{2^n}$
will be written as
\(
  (a_{00\dots 0},a_{10\dots0},a_{010\dots 0}, a_{110\dots 0},
    \dots, a_{11\dots 1})
\). For $d\in\{0,1\}$ we will use symbol $\overline{d}$ for the negation of $d$,
i.e., $\overline 0 = 1$ and $\overline 1 = 0$. We will also refer to forks of
these relations at some~coordinate $\tup k$ the same way as if all the
coordinates would be integer.

The last piece of notation has a~close connection to a~simple lemma about forks of
a~relation. For any map $e\colon J \to I$ and $\tup a \in A^I$ the symbol $\tup
a^e$ denotes the~$J$-tuple defined by $\tup a^e(j) = a_{e(j)}$.  Similarly, for
a~relation $R\leq \al A^I$, $R^e$ denotes the relation $\{ \tup a^e \midbar \tup
a\in R \}$.

\begin{lemma} \label{forks-of-projections} 
Let $\al A$ be an~algebra, $R\leq \al A^I$, $S\leq \al A^J$, $e\colon J \to I$,
and $R^e \subseteq S$. If $i\in I$ and there is a~unique $j\in J$ such that
$e(j) = i$ then
$\psi_{i} (R) \subseteq \psi_{j} (S)$. In particular,
\begin{enumerate}
  \item[(i)] if $R\subseteq S$ then $\psi_i(R) \subseteq \psi_i(S)$ for every
  $i\in I$;
  \item[(ii)] if $e\colon I\to I$ is bijective then $\psi_{e(i)} (R) = \psi_i
  (R^e)$ for every $i\in I$.
\end{enumerate}
\end{lemma}

\begin{proof} Suppose that $(a,b) \in \psi_i(R)$; i.e., there are tuples $\tup
a$, $\tup b \in R$ such that $a_i = a$, $b_i = b$, and $a_k = b_k$ for all
$k\neq i$. Then from $R^e\subseteq S$ we know that $\tup a^e$, $\tup
b^e\in S$. These tuples witness that $(a,b) \in \psi_j (S)$, because
$\tup a^e(j) = a_{i} = a$, $\tup b^e(j) = b_{i} = b$,
and $\tup a^e(k) = \tup b^e(k)$
for $k\neq j$.

The statement (i) is a~special case of the former for $I=J$, and $e$ being the
identity mapping. To prove (ii), suppose that $e$ is a~bijection on the set
$I$. Then from the statement for $S = R^e$ we get that
\(
  \psi_{e(i)} (R) \subseteq \psi_{i} (R^e).
\)
For the other inclusion substitute $e$ with $e^{-1}$, $R$ with $R^e$, and $i$
with $e(i)$.
\end{proof}

We recall two simple well-known lemmata for Mal'cev algebras.

\begin{lemma} \label{reflexive-relation}
  Let $\al A$ be a~Mal'cev algebra. Then any binary reflexive compatible
  relation on $\al A$ is a~congruence. \qed
\end{lemma}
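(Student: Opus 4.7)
The plan is to verify that any binary reflexive compatible relation $R$ on a Mal'cev algebra $\al A$ with Mal'cev term $q$ is symmetric and transitive, which together with compatibility gives that it is a congruence. The only tool needed is the Mal'cev identities $q(x,x,y) = y$ and $q(x,y,y) = x$ combined with the fact that $R$ is closed under the componentwise action of $q$.

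For symmetry, suppose $(a,b) \in R$. Reflexivity gives $(a,a), (b,b) \in R$, and I would apply $q$ to the three pairs $(a,b)$, $(a,a)$, $(b,b)$ componentwise; the first coordinate yields $q(a,a,b) = b$ and the second yields $q(b,a,b) = a$, so $(b,a) \in R$. For transitivity, suppose $(a,b), (b,c) \in R$; together with $(b,b) \in R$ from reflexivity, applying $q$ to $(a,b)$, $(b,b)$, $(b,c)$ gives first coordinate $q(a,b,b) = a$ and second coordinate $q(b,b,c) = c$, so $(a,c) \in R$.

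There is no genuine obstacle here: the argument is the standard Mal'cev trick, and the lemma follows once symmetry and transitivity are in place, since compatibility and reflexivity are assumed. The statement is marked $\qed$ in the excerpt, indicating the author also regards it as immediate.
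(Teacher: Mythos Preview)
Your overall strategy is exactly the standard one (and the paper itself gives no proof, treating the lemma as folklore), but your symmetry computation contains a slip. You apply $q$ to the triple of pairs $(a,b),(a,a),(b,b)$ and claim the second coordinate gives $q(b,a,b)=a$. That identity is \emph{not} a consequence of the Mal'cev equations $q(x,x,y)=y$ and $q(x,y,y)=x$; for instance, in a group with $q(x,y,z)=xy^{-1}z$ one gets $q(b,a,b)=ba^{-1}b$, which need not equal $a$.

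The fix is simply to reorder the inputs: apply $q$ to $(a,a),(a,b),(b,b)$ instead. Then the first coordinate is $q(a,a,b)=b$ and the second is $q(a,b,b)=a$, so $(b,a)\in R$ as desired. Your transitivity argument is correct as written.
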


\begin{lemma} \label{forks}
Let $\al A$ be a~Mal'cev algebra, and let $R$ be $n$-ary compatible relation
on $A$. If $(a, b) \in \psi_i(R)$, and
$(c_0,\dots,c_{i-1},a,c_{i+1},\dots,c_{n-1}) \in R$ then
\[ (c_0,\dots,c_{i-1},b,c_{i+1},\dots,c_{n-1}) \in R. \]
\end{lemma}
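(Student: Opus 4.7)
The plan is to produce the required tuple directly by applying the Mal'cev term coordinatewise to three carefully chosen tuples already in $R$. The unfolding of the hypothesis gives two tuples $\tup u,\tup v\in R$ with $u_i=a$, $v_i=b$, and $u_k=v_k$ for all $k\neq i$, and we additionally have the tuple $\tup c'=(c_0,\dots,c_{i-1},a,c_{i+1},\dots,c_{n-1})\in R$. The target tuple differs from $\tup c'$ only in the $i$-th coordinate, where $a$ is replaced by $b$; this is precisely the kind of local swap the Mal'cev term is designed to perform.

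Concretely, I would consider $q(\tup v,\tup u,\tup c')$, where $q$ is the Mal'cev term and the operation is applied coordinatewise. Since $R$ is compatible, this element lies in $R$. At coordinate $i$ we get $q(b,a,a)=b$ using the identity $q(x,y,y)=x$, and at every coordinate $k\neq i$ we get $q(u_k,u_k,c_k)=c_k$ using the identity $q(x,x,y)=y$ together with $u_k=v_k$. Therefore $q(\tup v,\tup u,\tup c')=(c_0,\dots,c_{i-1},b,c_{i+1},\dots,c_{n-1})$, which is the conclusion.

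There is essentially no obstacle here: the only thing to verify is that the two Mal'cev identities are applied at the correct coordinates, which is immediate from the description of the fork. In the write-up I would simply exhibit the three tuples, apply $q$ coordinatewise, and read off the two cases for $k=i$ and $k\neq i$.
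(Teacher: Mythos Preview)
Your proof is correct and essentially identical to the paper's: both apply the Mal'cev term $q$ coordinatewise to the triple $(\tup v,\tup u,\tup c')$ and read off the two identities $q(b,a,a)=b$ at coordinate $i$ and $q(u_k,u_k,c_k)=c_k$ elsewhere. The only cosmetic difference is that the paper first reduces to $i=0$ and displays the computation as a matrix.
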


\begin{proof}
Without loss of generality suppose that $i=0$. Let $q$ be a~Mal'cev term of
$\al A$, and let $(a,u_1,\dots,u_{n-1})$ and $(b,u_1,\dots,u_{n-1})$ be
witnesses for $(a,b) \in \psi_0(R)$. Then
\[
q \begin{pmatrix}
    b & a & a \\
    u_1 & u_1 & c_1 \\
    \vdots && \vdots \\
    u_{n-1} & u_{n-1} & c_{n-1} \\
  \end{pmatrix}
  = \begin{pmatrix} b \\ c_1 \\ \vdots \\ c_{n-1} \end{pmatrix}
    \in R,
\]
since we know that $R$ is compatible with $q$.
\end{proof}
 \section{Description of higher commutators} \label{the-description}

\begin{definition} \label{def:delta}
Let $\al A$ be an~algebra, and $\alpha_0,\dots,\alpha_{n-1} \in \Con \al A$.
First, for each congruence $\alpha_i$ choose one dimension in the
$n$-dimensional space. We define the relation $\Delta_{\al A}
(\alpha_0,\dots,\alpha_{n-1})$ as the $2^n$-ary relation indexed by the set
$2^n$ generated by tuples that are constant on two opposing $(n-1)$-dimensional
hyperfaces of~the hypercube orthogonal to~the dimension corresponding to
$\alpha_i$ and these constants are $\alpha_i$ congruent. 

We will use $\face_i^d (\tup a)$ to denote the $(d+1)$-th hyperface orthogonal to
dimension $i$, i.e., $\face_i^d(\tup a) = \tup a^{f_{i,d}}$ where $f_{i,d}(\tup
k) = k_0\dots k_{i-1}dk_i\dots k_{n-2}$.  The generating tuples of the relation
$\Delta_{\al A}(\alpha_0,\dots,\alpha_{n-1})$ will be denoted $\cube_i^n(a,b)$.
By definition, \( \face_i^0\cube_i^n(a,b) = (a,\dots,a) \), and \(
\face_i^1\cube_i^n(a,b) = (b,\dots,b) \); or equivalently, $\cube_i^n (a,b) =
(a,b)^{d_i}$ where $d_i\colon 2^n \to 2$ is defined by $\tup k \mapsto k_{(i)}$.
Finally, $\Delta_{\al A}(\alpha_0,\dots,\alpha_{n-1})$ is defined as
\[
\Delta_{\al A}(\alpha_0,\dots,\alpha_{n-1}) :=
  \Sg \big\{  \cube_i^n(a,b)\midbar i<n, a\equiv_{\alpha_i} b \big\} =
  \bigjoin_{i<n} \cube_i^n (\alpha_i).
\]
For the trivial case when $n=0$, we put $\Delta_{\al A} () := A$.
If the algebra is clear from the context, we will write just
$\Delta(\alpha_0,\dots,\alpha_{n-1})$ instead of~$\Delta_{\al A}
(\alpha_0,\dots,\alpha_{n-1})$, and if $\mathcal C$ is a~clone on the~set $A$,
we will write $\Delta_{\mathcal C} (\alpha_0,\dots,\alpha_{n-1})$ instead of
$\Delta_{(A,\mathcal C)} (\alpha_0,\dots,\alpha_{n-1})$.
\end{definition}

\begin{example} We will describe generators
of~$\Delta(\alpha_0,\alpha_1,\alpha_2)$ for three congruences $\alpha_0$,
$\alpha_1$, $\alpha_2$ of an algebra~$\al A$. The elements
of~$\Delta(\alpha_0,\alpha_1,\alpha_2)$ are indexed by vertices of
a~three-dimensional hypercube.
The generators are tuples of one of the following forms $(a,b,a,b,a,b,a,b)$,
where $a \equiv_{\alpha_0} b$, $(a,a,b,b,a,a,b,b)$ where $a \equiv_{\alpha_1}
b$, and $(a,a,a,a,b,b,b,b)$ where $a \equiv_{\alpha_2} b$. Their graphical
representation is given in Figure~\ref{delta-3-generators}.
\begin{figure}[ht]
\centering
\def\size{1.5} \def\shift{3.2}
\begin{tikzpicture}[ every node/.style={circle,inner sep=1} ]
\node (A) at (-\shift,0,0)             {$a$};
\node (B) at (-\shift,0,\size)         {$b$};
\node (C) at (-\shift,\size,0)         {$a$};
\node (D) at (-\shift,\size,\size)     {$b$};
\node (E) at (\size-\shift,0,0)         {$a$};
\node (F) at (\size-\shift,0,\size)     {$b$};
\node (G) at (\size-\shift,\size,0)     {$a$};
\node (H) at (\size-\shift,\size,\size) {$b$};

\draw  (A)--(C)--(G)--(E)--(A);
\draw  (B)--(D)--(H)--(F)--(B);
\draw [ultra thick] (A)--(B);
\draw [ultra thick] (C)--(D);
\draw [ultra thick] (E)--(F) node [midway,below right=.05] {\small $\alpha_0$};
\draw [ultra thick] (G)--(H);

\node (A) at (0,0,0)             {$a$};
\node (B) at (0,0,\size)         {$a$};
\node (C) at (0,\size,0)         {$b$};
\node (D) at (0,\size,\size)     {$b$};
\node (E) at (\size,0,0)         {$a$};
\node (F) at (\size,0,\size)     {$a$};
\node (G) at (\size,\size,0)     {$b$};
\node (H) at (\size,\size,\size) {$b$};

\draw  (A)--(B)--(F)--(E)--(A);
\draw  (C)--(D)--(H)--(G)--(C);

\draw [ultra thick] (A)--(C);
\draw [ultra thick] (B)--(D);
\draw [ultra thick] (F)--(H);
\draw [ultra thick] (E)--(G) node [midway,right=.05] {\small $\alpha_1$};

\node (A) at (\shift,0,0)             {$a$};
\node (B) at (\shift,0,\size)         {$a$};
\node (C) at (\shift,\size,0)         {$a$};
\node (D) at (\shift,\size,\size)     {$a$};
\node (E) at (\size+\shift,0,0)         {$b$};
\node (F) at (\size+\shift,0,\size)     {$b$};
\node (G) at (\size+\shift,\size,0)     {$b$};
\node (H) at (\size+\shift,\size,\size) {$b$};

\draw (A)--(B)--(D)--(C)--(A);
\draw (E)--(F)--(H)--(G)--(E);
\draw [ultra thick] (A)--(E);
\draw [ultra thick] (B)--(F) node [midway,below] {\small $\alpha_2$};
\draw [ultra thick] (C)--(G);
\draw [ultra thick] (D)--(H);

\end{tikzpicture}
\caption{Generators of $\Delta(\alpha_0,\alpha_1,\alpha_2)$} \label{delta-3-generators}
\end{figure}
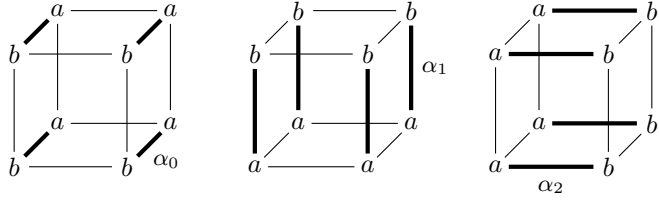
\end{example}

Before we get to the proof of Theorem~\ref{commutators-are-forks}, we will
describe some basic properties of the relation
$\Delta(\alpha_0,\dots,\alpha_{n-1})$.  The first lemma gives a~term
description of $\Delta(\alpha_0,\dots,\alpha_{n-1})$.  This description gives
a~clear connection of $\Delta(\alpha_0,\dots,\alpha_{n-1})$ and the term
condition.

\begin{lemma} \label{term-description-of-delta} 
For every algebra $\al A$, and congruences $\alpha_i \in\Con A$, $i<n$,
\begin{multline*}
\Delta(\alpha_0,\dots,\alpha_{n-1}) \\ = 
  \big\{ \big(
    t( \tup a_0, \dots, \tup a_{n-1} ),
    t( \tup b_0, \tup a_1 \dots, \tup a_{n-1} ),
    \dots,
    t( \tup b_0, \tup b_1 \dots, \tup b_{n-1} )
    \big) \midbar \\
  \forall i<n : m_i \in \mathbb N_0, \tup a_i,\tup b_i \in A^{m_i},
    \tup a_i \equiv_{\alpha_i} \tup b_i; t\in \Clo_{\sum m_i} \al A 
  \big\}
\end{multline*}
where the elements of the $2^n$-tuple include the term $t$ applied to all the
combinations of corresponding $\tup a_i$'s and $\tup b_i$'s.
\end{lemma}

\begin{proof}
  The relation $\Delta(\alpha_0,\dots,\alpha_{n-1})$ is generated by tuples
  $\cube_i^n(a,b)$ for $a \equiv_{\alpha_i} b$. So,
  $\Delta(\alpha_0,\dots,\alpha_{n-1})$ is the set of all tuples of the form
  \[
    t( \cube_{i_0}^n(a_0,b_0), \dots, \cube_{i_{k-1}}^n(a_{k-1},b_{k-1}) )
  \]
  where $t\in \Clo_k \al A$, and for all $j<k$ we have $i_j < n$, $a_j \equiv
  b_j$ modulo $\alpha_{i_j}$. The~description in the statement of the lemma can
  be obtained from this by grouping together $\cube_{i_j}^n(a_j,b_j)$'s with
  the same index $i_j$, and applying the term $t$ coordinatewise.
\end{proof}

\begin{example}
In the ternary commutator case, the lemma tells that
\begin{multline*}
  \Delta(\alpha_0,\dots,\alpha_{n-1}) =
  \big\{ \big(
    t(\tup a_0,\tup a_1,\tup a_2),
    t(\tup b_0,\tup a_1,\tup a_2),
    t(\tup a_0,\tup b_1,\tup a_2),
    t(\tup b_0,\tup b_1,\tup a_2), \\
    t(\tup a_0,\tup a_1,\tup b_2),
    t(\tup b_0,\tup a_1,\tup b_2),
    t(\tup a_0,\tup b_1,\tup b_2),
    t(\tup b_0,\tup b_1,\tup b_2)
  \big) \midbar\\
  m_0,m_1,m_2 \in \mathbb N_0,  t\in \Clo_{m_0+m_1+m_2} \al A,
    \forall i < 3: \tup a_i,\tup b_i \in A^{m_i},
      \tup a_i \equiv_{\alpha_i} \tup b_i \big\}.
\end{multline*}
The graphical representation of a~typical element of this relation is given
in Figure~\ref{fig:term-description-of-delta}.

\begin{figure}[ht]
\newcommand{\size}{2.6}
\newcommand{\vsiz}{2.4}
\begin{tikzpicture}[every node/.style={inner sep=1}]
\node (A) at (0,0,0)             {$t(\tup a_0,\tup a_1,\tup a_2)$};
\node (B) at (0,0,\size)         {$t(\tup b_0,\tup a_1,\tup a_2)$};
\node (C) at (0,\vsiz,0)         {$t(\tup a_0,\tup b_1,\tup a_2)$};
\node (D) at (0,\vsiz,\size)     {$t(\tup b_0,\tup b_1,\tup a_2)$};
\node (E) at (\size,0,0)         {$t(\tup a_0,\tup a_1,\tup b_2)$};
\node (F) at (\size,0,\size)     {$t(\tup b_0,\tup a_1,\tup b_2)$};
\node (G) at (\size,\vsiz,0)     {$t(\tup a_0,\tup b_1,\tup b_2)$};
\node (H) at (\size,\vsiz,\size) {$t(\tup b_0,\tup b_1,\tup b_2)$};

\draw  (A)--(C)--(G)--(E)--(A);
\draw  (B)--(D)--(H)--(F)--(B);
\draw  (A)--(B);
\draw  (C)--(D);
\draw  (E)--(F);
\draw  (G)--(H);
\end{tikzpicture}
\caption{} \label{fig:term-description-of-delta}
\end{figure}
\end{example}

\begin{lemma} \label{term-condition} 
Let $\al A$ be an~algebra, and $\alpha_0$, \dots, $\alpha_{n-1}$, $\gamma \in
\Con \al A$.  Then $\alpha_0,\dots,\alpha_{n-2}$ centralize $\alpha_{n-1}$
modulo $\gamma$ if for every tuple
\[
  (a_{0\dots0},\dots,a_{1\dots 1},b_{0\dots0},\dots,b_{1\dots 1}) \in
    \Delta(\alpha_0,\dots,\alpha_{n-1})
\]
such that $a_{\tup k}
\equiv_\gamma b_{\tup k}$ for all $\tup k\in 2^{n-1} \setminus
\{1\dots1\}$ we have also $a_{1\dots1} \equiv_\gamma b_{1\dots1}$. \qed
\end{lemma}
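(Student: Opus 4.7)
The plan is to simply translate both sides of the claimed equivalence through Lemma~\ref{term-description-of-delta}, noting that the lemma's statement is basically a direct reformulation of Bulatov's definition in terms of the relation $\Delta(\alpha_0,\dots,\alpha_{n-1})$.

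First I would unpack the indexing. For $k<2^n$, the binary digit $k_{(n-1)}$ is $0$ precisely when $k<2^{n-1}$, and flipping that digit sends $k$ to $k+2^{n-1}$. Thus the pairs $(i,i+2^{n-1})$ with $i<2^{n-1}$ are exactly the edges of the hypercube aligned with the direction corresponding to $\alpha_{n-1}$. The two distinguished indices play the role of the last edge: $2^{n-1}-1$ has digits $k_{(0)}=\dots=k_{(n-2)}=1$ and $k_{(n-1)}=0$, while $2^n-1$ has all digits equal to $1$.

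Next, by Lemma~\ref{term-description-of-delta} every tuple in $\Delta(\alpha_0,\dots,\alpha_{n-1})$ has the form
\[
  a_k = t\bigl((\tup a_0,\tup b_0)_{(k_{(0)})},\dots,(\tup a_{n-1},\tup b_{n-1})_{(k_{(n-1)})}\bigr),
\]
where $t$ is a term of $\al A$ and $\tup a_i \equiv_{\alpha_i} \tup b_i$ for each $i<n$. Under this correspondence, the hypothesis "$a_i\equiv_\gamma a_{i+2^{n-1}}$ for all $i<2^{n-1}-1$" says precisely that
\[
  t(\tup x_0,\dots,\tup x_{n-2},\tup a_{n-1}) \equiv_\gamma t(\tup x_0,\dots,\tup x_{n-2},\tup b_{n-1})
\]
for every $(\tup x_0,\dots,\tup x_{n-2})\in\{\tup a_0,\tup b_0\}\times\dots\times\{\tup a_{n-2},\tup b_{n-2}\}$ other than $(\tup b_0,\dots,\tup b_{n-2})$ (that excluded corner is exactly the index $2^{n-1}-1$). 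Likewise the conclusion $a_{2^{n-1}-1}\equiv_\gamma a_{2^n-1}$ unwinds to $t(\tup b_0,\dots,\tup b_{n-2},\tup a_{n-1}) \equiv_\gamma t(\tup b_0,\dots,\tup b_{n-2},\tup b_{n-1})$, which is exactly the centralization conclusion in Definition~\ref{def:bulatov}.

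So the proof is a straightforward dictionary between the two formulations, and I would just spell out the correspondence and invoke it. The only slight discrepancy with Definition~\ref{def:bulatov} is the extra condition $\tup a_i\neq\tup b_i$ there; this is harmless because if some $\tup a_i=\tup b_i$ then the corresponding generating tuple $\tup c_i^n(\cdot,\cdot)$ is constant in direction $i$, which makes both sides of the term condition trivially hold. No real obstacle is involved; the only thing to be careful about is matching indices to binary digits in the right way, which is why the lemma is stated as a reformulation rather than as an abstract equivalence.
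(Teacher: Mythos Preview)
Your proposal is correct and matches the paper's approach: the paper gives no proof beyond the \qed marker, treating the lemma as an immediate consequence of the term description of $\Delta$ in Lemma~\ref{term-description-of-delta}, which is precisely the dictionary you spell out. One small remark: the lemma as written asserts only the implication from the $\Delta$-condition to centralization rather than an equivalence, but your argument establishes both directions and the converse is indeed used later in the paper.
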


\begin{lemma} \label{delta-is-a-congruence} 
Let $\al A$ be an~algebra with congruences $\alpha_0,\dots,\alpha_n$, $i<n$, and
$s_i\colon k\mapsto {k_{(0)}\dots k_{(i-1)}\overline{k_{(i)}}k_{(i+1)}\dots
k_{(n-1)}}$. Then
\begin{enumerate}
\item[(i)]
  if $d\in\{0,1\}$ and\/ $\tup a \in \Delta(\alpha_0,\dots,\alpha_{n-1})$ then 
  \[
    \face_i^d \tup a \in
      \Delta(\alpha_0,\dots,\alpha_{i-1},\alpha_{i+1},\dots,\alpha_{n-1});
  \]
\item[(ii)] if $\tup a \in \al A^{2^{n-1}}$ such that
  \[
    \face_i^0 \tup a = \face_i^1 \tup a
      \in \Delta(\alpha_0,\dots,\alpha_{i-1},\alpha_{i+1},\dots,\alpha_{n-1}),
  \]
  then $\tup a \in \Delta(\alpha_{0},\dots,\alpha_{n-1})$;
\item[(iii)] 
  \(
    \Delta(\alpha_0,\dots,\alpha_{n-1})^{s_i}
      = \Delta(\alpha_0,\dots,\alpha_{n-1}).
  \)
\end{enumerate}
Furthermore, if $\al A$ is a~Mal'cev algebra then
the binary relation
\[
  \delta = \left\{ \left(\face_i^0 \tup a,\face_i^1 \tup a\right)
    \midbar \tup a \in \Delta(\alpha_0,\dots,\alpha_{n-1}) \right\}
\]
is a~congruence on
$\Delta(\alpha_0,\dots,\alpha_{i-1},\alpha_{i+1},\dots,\alpha_n)$.
\end{lemma}

\begin{proof}
Since all the considered relations are compatible, we can check the
validity of particular inclusions on the generators of the relations, i.e.,
tuples $\cube_{i,n}(a,b)$. In detail, to prove (i) one has to observe that 
\[
  \face_i^d \cube_j^n (a,b) = \begin{cases}
    \cube_j^{n-1}(a,b) & \text{if $j<i$,} \\
    (c,\dots,c) & \text{where $c \in \{a,b\}$ if $j=i$,} \\
    \cube_{j-1}^{n-1}(a,b) & \text{if $j>i$,} \\
  \end{cases}
\]
and consequently
\begin{multline*}
  \face_i^d
    \{ \cube_j^n(a,b) \midbar j<n, a\equiv_{\alpha_j} b \} \\
    = \{ \cube_j^{n-1}(a,b) \midbar j<i, a\equiv_{\alpha_j} b \}
    \cup \{ \cube_{j-1}^{n-1}(a,b) \midbar i<j<n , a\equiv_{\alpha_j} b \}.
\end{multline*}
Hence the relation generated by the left hand side is the same as the relation
generated by the right hand side which gives the desired equality.

For (ii), first observe that
\[
  \{ \tup a \midbar
    \face_i^0 \tup a = \face_i^1 \tup a 
    \in \Delta(\alpha_0,\dots,\alpha_{i-1},\alpha_{i+1},\dots,\alpha_{n-1})\}
\]
is the~$2^n$-ary relation generated by tuples $\tup a$ such that $\face_i^0 \tup
a = \face_i^1 \tup a = \cube_{j,n-1}(a,b)$, $a \equiv b$ modulo $\alpha_j$, or
$\alpha_{j+1}$ when $j<i$, or $j\geq i$ respectively.  Second, if $\tup a$ is
such a~tuple then $\tup a = \cube_j^n(a,b)$ if $j<i$, or $\tup a =
\cube_{j+1}^n(a,b)$ if $j\geq i$.  In either case, $\tup a \in
\Delta(\alpha_0,\dots,\alpha_{n-1})$ for $a$, $b$ that are congruent modulo the
corresponding congruence.

The item (iii) is a~consequence of the fact that
\[
  (\cube_j^n(a,b))^{s_i} = \begin{cases}
    \cube_j^n(b,a) &\text{if $j=i$, or} \\
    \cube_j^n(a,b) &\text{otherwise.}
  \end{cases}
\]
Hence $(\cube_j^n \alpha_j)^{s_i} = \cube_j^n\alpha_j$ from the
symmetry of congruences. From the definition of
$\Delta(\alpha_0,\dots,\alpha_{n-1})$, we get that
\[
  \Delta(\alpha_0,\dots,\alpha_{n-1})^{s_i} =
    \Delta(\alpha_0,\dots,\alpha_{n-1}).
\]

From items (i)--(iii) we already know that the binary relation $\delta$ is
a~reflexive symmetric binary relation on
$\Delta(\alpha_0,\dots,\alpha_{i-1},\alpha_{i+1},\dots,\alpha_{n-1})$. The
rest follows from Lemma~\ref{reflexive-relation}.
\end{proof}

\begin{lemma} \label{forks-of-delta}
Let $\al A$ be a~Mal'cev algebra with congruences
$\alpha_0,\dots,\alpha_{n-1}$. Then
$
  \psi_{\tup j}(\Delta(\alpha_0,\dots,\alpha_{n-1})) =
  \psi_{\tup k}(\Delta(\alpha_0,\dots,\alpha_{n-1}))
$
for all $\tup j,\tup k \in 2^n$.
\end{lemma}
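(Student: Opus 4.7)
The plan is to prove the equality of forks by exploiting the bit-flip symmetries of $\Delta := \Delta(\alpha_0,\dots,\alpha_{n-1})$ established in Lemma~\ref{delta-is-a-congruence}(iii). Recall that the permutation $\sigma_k\colon m \mapsto m \oplus 2^k$ of $\{0,\dots,2^n-1\}$ is a bijection satisfying $\Delta^{\sigma_k} = \Delta$ for every $k<n$.

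Applying Lemma~\ref{forks-of-projections}(iii) with $e := \sigma_k$ (which is bijective) yields
\[
  \psi_{\sigma_k(j)}(\Delta) \;=\; \psi_j(\Delta^{\sigma_k}) \;=\; \psi_j(\Delta)
\]
for every $j<2^n$; equivalently $\psi_{j\oplus 2^k}(\Delta) = \psi_j(\Delta)$. Thus a single bit flip in the position index leaves the fork set invariant.

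Now given arbitrary $i,j < 2^n$, write $i \oplus j = 2^{k_0} \oplus 2^{k_1} \oplus \dots \oplus 2^{k_{r-1}}$ for the distinct bit positions $k_0,\dots,k_{r-1}$ in which $i$ and $j$ differ. By successively applying the previous equality along the chain
\[
  i,\; i \oplus 2^{k_0},\; i \oplus 2^{k_0} \oplus 2^{k_1},\; \dots,\; i \oplus 2^{k_0} \oplus \dots \oplus 2^{k_{r-1}} = j,
\]
we obtain $\psi_i(\Delta) = \psi_j(\Delta)$, which is the desired conclusion.

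I do not expect any real obstacle: the proof is a straightforward symmetry argument that plugs the invariance of $\Delta$ under each $\sigma_k$ into the bijective case of Lemma~\ref{forks-of-projections}. I note in passing that the Mal'cev hypothesis is not actually used in this argument — items (i)--(iii) of Lemma~\ref{delta-is-a-congruence} hold for arbitrary algebras — so the statement is true in general, though we only need it for Mal'cev algebras in what follows.
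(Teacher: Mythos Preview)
Your proof is correct and follows essentially the same approach as the paper: both invoke Lemma~\ref{delta-is-a-congruence}(iii) to get $\Delta^{\sigma_k}=\Delta$, apply the bijective case of Lemma~\ref{forks-of-projections} to conclude $\psi_{j\oplus 2^k}(\Delta)=\psi_j(\Delta)$, and then chain single-bit flips to pass from an arbitrary $i$ to an arbitrary $j$. Your remark that the Mal'cev hypothesis is not actually used here is also accurate.
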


\begin{proof}
For simplicity, let
$\psi_{\tup k} = \psi_{\tup k}(\Delta(\alpha_0,\dots,\alpha_{n-1}))$.
If $s_i$ is the permutation on $2^n$ defined by
$s_i(\tup k) = k_0\dots k_{i-1}\overline{k_i}k_{i+1}\dots k_{n-1}$
then from Lemma \ref{delta-is-a-congruence}(iii), we know that
\(
  \Delta(\alpha_0,\dots,\alpha_{n-1}) =
  \Delta(\alpha_0,\dots,\alpha_{n-1})^{s_i}
\).
This gives, by Lemma~\ref{forks-of-projections}, that $\psi_{\tup k} =
\psi_{s_i(\tup k)}$ for all $i<n$. But, if $i_0$, \dots, $i_{m-1}$ are exactly
those indices $i$ such that $k_{i} \neq j_{i}$ then
$\tup j = s_{i_0} \circ \cdots \circ s_{i_{m-1}} (\tup k)$, and consequently
$\psi_{\tup k} = \psi_{\tup j}$ for all $\tup j$, $\tup k$.
\end{proof}

Instead of proving Theorem \ref{description-of-higher-commutators} directly,
we will prove the following refinement. The theorem is then given by
equivalence of (1) and (4).

\begin{proposition} \label{commutator-and-delta} 
If $\al A$ is a~Mal'cev algebra, $\alpha_0,\dots,\alpha_{n-1} \in \Con \al A$,
and $a,b\in \al A$; then the following is equivalent
\begin{enumerate}
  \item $(a,b) \in {\psi_{1\dots1}(\Delta(\alpha_0,\dots,\alpha_{n-1}))}$;
  \item $(a,\dots,a,b) \in \Delta(\alpha_0,\dots,\alpha_{n-1})$;
  \item there exists $c_0,\dots,c_{2^{n-1}-2}$ such that
  \[
    (c_0,\dots,c_{2^{n-1}-2},a,c_0,\dots,c_{2^{n-1}-2},b)
      \in \Delta(\alpha_0,\dots,\alpha_{n-1}).
  \]
  \item $a \equiv_{[\alpha_0,\dots,\alpha_{n-1}]} b$;
\end{enumerate}
\end{proposition}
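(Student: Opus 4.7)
The plan is to establish the cycle (2) $\Rightarrow$ (3) $\Rightarrow$ (1) $\Rightarrow$ (2) first, yielding the equivalence of (1)--(3), and then close via (2) $\Rightarrow$ (4) $\Rightarrow$ (1). Throughout, Lemma \ref{forks-of-delta} lets me identify $\psi_{2^{n-1}-1}(\Delta(\alpha_0,\dots,\alpha_{n-1}))$ with $\psi_{2^n-1}(\Delta(\alpha_0,\dots,\alpha_{n-1}))$ whenever convenient.

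For (2) $\Rightarrow$ (3), set $c_i = a$ for every $i$. For (3) $\Rightarrow$ (1), given a witness $(c_0,\dots,c_{2^{n-1}-2},a,c_0,\dots,c_{2^{n-1}-2},b) \in \Delta$, both of its halves along the $\alpha_{n-1}$-direction lie in $\Delta(\alpha_0,\dots,\alpha_{n-2})$ by Lemma \ref{delta-is-a-congruence}(i); gluing the first half to a duplicate of itself via Lemma \ref{delta-is-a-congruence}(ii) produces the tuple $(c_0,\dots,c_{2^{n-1}-2},a,c_0,\dots,c_{2^{n-1}-2},a) \in \Delta$, which differs from the original witness only at coordinate $2^n-1$, so $(a,b) \in \psi_{2^n-1}(\Delta)$. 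For (1) $\Rightarrow$ (2), the constant tuple $(a,\dots,a) = \tup c_0^n(a,a)$ is a generator of $\Delta$, and applying Lemma \ref{forks} at coordinate $2^n-1$ swaps its final entry for $b$.

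For (2) $\Rightarrow$ (4), I would use Lemma \ref{term-description-of-delta} to write $(a,\dots,a,b)$ as a tuple of values of some term $t$ applied to appropriate choices from pairs $\tup a_i \equiv_{\alpha_i} \tup b_i$. All coordinates other than $2^n-1$ equal $a$, so the premises of Bulatov's term condition (Definition \ref{def:bulatov}) are satisfied trivially (each required congruence is between equal elements), and the definition of $[\alpha_0,\dots,\alpha_{n-1}]$ as the smallest congruence modulo which $\alpha_0,\dots,\alpha_{n-2}$ centralize $\alpha_{n-1}$ then forces $a \equiv_{[\alpha_0,\dots,\alpha_{n-1}]} b$.

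The main work lies in (4) $\Rightarrow$ (1). Put $\gamma := \psi_{2^n-1}(\Delta)$. This relation is reflexive (via the constant tuple) and compatible with every term operation (since a primitive-positive definition over a compatible relation is compatible), hence a congruence by Lemma \ref{reflexive-relation}. To conclude $[\alpha_0,\dots,\alpha_{n-1}] \leq \gamma$, Lemma \ref{term-condition} reduces the task to: for every $\tup a \in \Delta$ satisfying $a_i \equiv_\gamma a_{i+2^{n-1}}$ for each $i < 2^{n-1}-1$, also $a_{2^{n-1}-1} \equiv_\gamma a_{2^n-1}$. Exploiting $\gamma = \psi_i(\Delta)$ for every $i$ (Lemma \ref{forks-of-delta}) together with Lemma \ref{forks}, I would successively overwrite each $a_i$ by $a_{i+2^{n-1}}$ for $i < 2^{n-1}-1$, ending with a tuple in $\Delta$ of precisely the form required by (3) (with $c_i := a_{i+2^{n-1}}$); the already-proved (3) $\Rightarrow$ (1) then yields $(a_{2^{n-1}-1},a_{2^n-1}) \in \gamma$. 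The chief obstacle is the bookkeeping of binary-indexed coordinates under the iterated Lemma \ref{forks} substitutions; once one observes that flipping each of the first $2^{n-1}-1$ coordinates lands the tuple in the canonical shape of (3), the rest assembles cleanly.
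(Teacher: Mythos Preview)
Your argument is correct and matches the paper's proof in all essentials. The only difference is organizational: the paper runs the cycle $(1)\Rightarrow(2)\Rightarrow(3)\Rightarrow(4)\Rightarrow(1)$, embedding the ``duplicate a half via Lemma~\ref{delta-is-a-congruence}(ii)'' step inside $(4)\Rightarrow(1)$, whereas you isolate that step as a standalone $(3)\Rightarrow(1)$ and then invoke it at the end of $(4)\Rightarrow(1)$; the paper also overwrites the second-half coordinates by the first-half ones rather than the reverse, but this is immaterial by Lemma~\ref{forks-of-delta}.
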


\begin{proof}
The implication $(1)\to(2)$ is direct corollary of Lemma \ref{forks}, given that
$\Delta(\alpha_0,\dots,\alpha_{n-1})$ contains all constant tuples, in
particular $(a,\dots,a)$. $(2)\to(3)$ is trivial.  For $(3)\to(4)$ suppose that
$(3)$ is satisfied for a~pair $(a,b)$ then since $c_i \equiv c_i \pmod
{[\alpha_0,\dots,\alpha_{n-1}]}$ for all $i < 2^{n-1} - 1$ and
$\alpha_0,\dots,\alpha_{n-2}$ centralize $\alpha_{n-1}$ modulo
$[\alpha_0,\dots,\alpha_{n-1}]$, we have $a\equiv b
\pmod {[\alpha_0,\dots,\alpha_{n-1}]}$ from Lemma~\ref{term-condition}.

The last to prove is $(4)\to(1)$, in other words that
\begin{equation} \label{eq:4to1}
  [\alpha_0,\dots,\alpha_{n-1}] \leq
  \psi_{1\dots1}(\Delta(\alpha_0,\dots,\alpha_{n-1})).
\end{equation}
Let $\psi = \psi_{1\dots1}(\Delta(\alpha_0,\dots,\alpha_{n-1}))$; from Lemma
\ref{forks-of-delta} we know that
\[
  \psi = \psi_{1\dots1}(\Delta(\alpha_0,\dots,\alpha_{n-1})) =
    \psi_{\tup k}(\Delta(\alpha_0,\dots,\alpha_{n-1}))
\]
for all $\tup k \in 2^n$, so we do not have to distinguish between forks at
different coordinates. To prove (\ref{eq:4to1}) by the definition of the
commutator, it is enough to prove that $\alpha_0,\dots,\alpha_{n-2}$
centralize $\alpha_{n-1}$ modulo~$\psi$.  For that we will use Lemma
\ref{term-condition}. Suppose that
\[
 ( a_{0\dots0}, \dots, a_{1\dots 1}, b_{0\dots 0}, \dots, b_{1\dots 1} )
\in \Delta(\alpha_0,\dots,\alpha_{n-1}),
\]
and $a_{\tup i} \equiv_\psi b_{\tup i}$  for all ${\tup i}\in 2^{n-1} \setminus
\{1\dots 1\}$.  By repeated use of Lemma
\ref{forks} we can replace $b_{00\dots 0}$, \dots, $b_{01\dots 1}$ by the respective
$a_{\tup i}$'s. Hence,
\[
  ( a_{00\dots 0}, \dots, a_{01\dots 1}, a_{11\dots 1},
    a_{00\dots 0}, \dots, a_{01\dots 1}, b_{11\dots 1} )
  \in \Delta(\alpha_0,\dots,\alpha_{n-1}).
\]
In addition, we know from Lemma \ref{delta-is-a-congruence}(i) and
\ref{delta-is-a-congruence}(ii) that 
\[
  ( a_{00\dots 0}, \dots, a_{01\dots 1}, a_{11\dots 1},
    a_{00\dots 0}, \dots, a_{01\dots 1}, a_{11\dots 1} )
    \in \Delta(\alpha_0,\dots,\alpha_{n-1}).
\]
So, $a_{11\dots 1} \equiv_\psi b_{11\dots 1}$ which concludes the proof that
$\alpha_0,\dots,\alpha_{n-2}$ centralize $\alpha_{n-1}$ modulo $\psi$.
\end{proof}

In the last proposition some parts have been already known. The proposition
(in the case of Mal'cev algebras) generalize Theorem 4.9 of~\cite{fm87} which
gives equivalence of (2), (3), and (4) for binary commutators in
congruence modular varieties. The omitted equivalence of (1) and (3) in the
binary case can be easily derived from the known fact that
$\Delta(\alpha_0,\alpha_1)$ is a~congruence on rows.  Furthermore, for the
higher commutators, the implication $(3)\to(4)$ for the variety of
groups has appeared in \cite{sha14}; and if all $\alpha_i$'s are principal
congruences, the equivalence of (2) and (4) is given by
\cite[Lemma~6.13]{am10} (via an easy translation similar
to~Lemma~\ref{term-description-of-delta}).
 \section{Strong cube terms, and clones of operations preserving commutators}

We will use terms that generalize Mal'cev terms. These terms
will play similar role as a~difference term in the case of binary
commutator.
A~$(2^n-1)$-ary term $q_n$ is a~\define{strong $n$-cube term} if it satisfies 
\[
  q_n(x_{00\dots 0},x_{10\dots0},\dots,x_{01\dots 1}) \equals x_{11\dots 1}
\]
whenever there is $i<n$ such that for all $\tup k$ we have $x_{\tup k} =
x_{k_0\dots k_{i-1}\overline{k_i}k_{i+1}\dots k_{n-1}}$. This gives a~set of $n$
identities, each with $2^{n-1}$ variables. The two identities for strong
$2$-cube term are
\[
  q_2(x,y,x) \equals y
    \quad\text{and}\quad 
  q_2(x,x,y) \equals y.
\]
So, the term $q_2(y,x,z)$ is a~Mal'cev term, and if $q$ is a~Mal'cev term then
$q(y,x,z)$ is a~strong $2$-cube term. The three identities for strong $3$-cube
term are
\begin{align*}
q_3(x,y,z,w,x,y,z) &\equals w \\
q_3(x,y,x,y,z,w,z) &\equals w \\
q_3(x,x,y,y,z,z,w) &\equals w.
\end{align*}

Strong cube terms are stricter version of cube terms introduced in
\cite{bim+10}; a~$(2^n-1)$-ary term is an~\emph{$n$-cube term} if it satisfies
\[
  q_n(x_{00\dots 0},x_{10\dots 0},\dots,x_{01\dots 1}) \equals x_{11\dots 1}
\]
whenever there is $i<n$ such that $x_{\tup k} = x_{\tup l}$ for all $\tup k,
\tup l$ such that $k_i = l_i$. This gives $n$ two-variable identities
compared to the $2^{n-1}$ variables used in the identities for strong cube
terms. One can see that for $n\geq 2$ every strong $n$-cube term satisfies the
identities of an~$n$-cube term; the $i$-th identity of a~cube term is implied by
almost any identity for a~strong cube term except for the $i$-th one.

\begin{lemma} \label{existence-of-strong-cube-terms} 
The following is equivalent for any algebra $\al A$.
\begin{enumerate}
  \item $\al A$ has a~strong $n$-cube term for all $n\geq 2$,
  \item $\al A$ has a~strong $n$-cube term for some $n\geq 2$,
  \item $\al A$ has a~Mal'cev term.
\end{enumerate}
\end{lemma}
\begin{proof}
From the observation in the above paragraphs, we know that the condition (3) is
equivalent to
\begin{enumerate}
  \item[(3')] $\al A$ has a~strong $2$-cube term.
\end{enumerate}
We will prove equivalence of (1), (2), and (3'). The implication $(1) \to (2)$
is trivial. For $(2)\to (3')$ observe that if $q_n$ is a~strong $n$-cube term
then
\( q_2(x,y,z) = q_n(x,\dots,x,x,y,z) \)
is a~strong $2$-cube term. For the last implication $(3') \to (1)$ we can
construct strong cube terms by the recursion:
\[
q_{n+1} (x_0,\dots,x_{2^{n+1}-1}) =
  q_2 (q_n ( x_0,\dots,x_{2^n -2} ),
    x_{2^n -1},
    q_n ( x_{2^n},\dots,x_{2^{n+1} -2} ))
\]
It is easy to check that if $q_n$ is a~strong $n$-cube term and $q_2$ is
a~strong $2$-cube term then $q_{n+1}$ is a~strong $(n+1)$-cube term.
\end{proof}

The following lemma is the key for proving Theorem
\ref{functions-preserving-n-ary-commutator} and a~lot of properties of higher
commutators in Mal'cev varieties.

\begin{lemma} \label{delta-contains-graph-of-cube-term} 
Let $\al A$ be an~algebra with a~strong $n$-cube term $q_n$,
$\alpha_1,\dots,\alpha_n \in \Con \al A$. Then $\tup a \in
\Delta(\alpha_1,\dots,\alpha_n)$ if and only if
\[
  \face_i^0 \tup a \in
  \Delta(\alpha_0,\dots,\alpha_{i-1},\alpha_{i+1},\dots,\alpha_{n-1})
\]
for each $i$ and 
\(
  q_n(a_{00\dots 0},a_{10\dots 0},\dots,a_{01\dots 1}) \equiv_{[\alpha_0,\dots,\alpha_{n-1}]}
  a_{11\dots 1}
\).
\end{lemma}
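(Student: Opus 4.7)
The plan is to handle both directions of the biconditional through a single construction: given any tuple $\tup a \in A^{2^n}$ satisfying assumption~1, I will produce a companion tuple $\tup a' \in \Delta(\alpha_1,\dots,\alpha_n)$ that agrees with $\tup a$ on the first $2^n-1$ coordinates and whose last coordinate equals $q_n(a_0,\dots,a_{2^n-2})$. For each $l \in \{0,\dots,2^n-2\}$ I define $\tup t^{(l)} \in A^{2^n}$ by $t^{(l)}_k := a_{l \btimes k}$, and then set $\tup a' := q_n(\tup t^{(0)},\dots,\tup t^{(2^n-2)})$ with $q_n$ applied coordinatewise.

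The first step will be to show that $\tup t^{(l)} \in \Delta(\alpha_1,\dots,\alpha_n)$. Whenever $l_{(i)} = 0$, flipping bit $i$ of $k$ leaves $l \btimes k$ unchanged, so $\tup t^{(l)}$ is constant across direction~$i$; iterated use of Lemma~\ref{delta-is-a-congruence}(ii), one direction at a~time, reduces the claim to membership of the compressed projection $(a_k \midbar k_{(i)}=0 \text{ whenever } l_{(i)}=0)$ in $\Delta(\alpha_i \midbar l_{(i)}=1)$. Iterating Lemma~\ref{delta-is-a-congruence}(i) on assumption~1 delivers exactly this projection, so $\tup t^{(l)} \in \Delta(\alpha_1,\dots,\alpha_n)$, and because $\Delta$ is closed under term operations also $\tup a' \in \Delta(\alpha_1,\dots,\alpha_n)$.

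Next I read off the coordinates of $\tup a'$. At $k = 2^n-1$ one has $l \btimes k = l$, so $a'_{2^n-1} = q_n(a_0,\dots,a_{2^n-2})$. At $k < 2^n-1$ some bit $i$ of $k$ equals $0$; then $(l \oplus 2^i) \btimes k = l \btimes k$, so the arguments of $q_n$ at coordinate $k$ agree under flipping of bit $i$ of the argument index, the strong $n$-cube term identity in direction~$i$ fires, and $a'_k = t^{(2^n-1-2^i)}_k = a_{(2^n-1-2^i)\btimes k} = a_k$, the last equality using $k_{(i)}=0$.

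Both directions then follow quickly. If $\tup a \in \Delta(\alpha_1,\dots,\alpha_n)$, then assumption~1 is just Lemma~\ref{delta-is-a-congruence}(i), the construction produces $\tup a' \in \Delta$ sharing its first $2^n-1$ coordinates with $\tup a$, and so $(a_{2^n-1}, q_n(a_0,\dots,a_{2^n-2})) \in \psi_{2^n-1}(\Delta(\alpha_1,\dots,\alpha_n)) = [\alpha_1,\dots,\alpha_n]$ by Theorem~\ref{commutators-are-forks}. Conversely, given both assumptions the same construction produces $\tup a' \in \Delta$, assumption~2 places $(a'_{2^n-1}, a_{2^n-1})$ in $[\alpha_1,\dots,\alpha_n] = \psi_{2^n-1}(\Delta)$, and Lemma~\ref{forks} lets me swap the last coordinate of~$\tup a'$ for $a_{2^n-1}$, yielding $\tup a \in \Delta$. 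I expect the main technical hurdle to be the iterated bookkeeping in the second paragraph: keeping track of which directions have been collapsed and confirming that the compressed projection coincides with the instance of a lower-arity~$\Delta$ that assumption~1 makes available.
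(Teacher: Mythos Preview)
Your proof is correct and essentially identical to the paper's: both apply $q_n$ coordinatewise to the tuples $\tup t^{(l)}$ with $t^{(l)}_k = a_{l\btimes k}$ (the paper writes these as $\tup a^{m_l}$), verify that each lies in $\Delta(\alpha_1,\dots,\alpha_n)$ via Lemma~\ref{delta-is-a-congruence}, compute the result using the strong cube identities, and then read off both directions from Theorem~\ref{commutators-are-forks} and Lemma~\ref{forks}. The only difference is organizational---your iterated descent through lower-arity $\Delta$'s to justify $\tup t^{(l)}\in\Delta$ versus the paper's induction on the number of zero bits of~$l$ staying inside the full-arity $\Delta$---and these are interchangeable.
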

\begin{proof}
We will prove the lemma in two steps.

\begin{claim} 
If $\face_i^0 \tup a \in
\Delta(\alpha_0,\dots,\alpha_{i-1},\alpha_{i+1},\dots,\alpha_{n-1})$ for each
$i<n$, and $q_n(a_{00\dots 0},$ $a_{10\dots 0},\dots,a_{01\dots 1}) = a_{11\dots1}$
then $\tup a \in \Delta(\alpha_1,\dots,\alpha_{n-1});$
\end{claim}

For $\tup k \in 2^n$ and $\tup b \in A^{2^n}$ let $\tup a^{\tup k}$ denotes the
$2^n$-tuple
\[
  \tup a^{\tup k} =
    (a_{0 0\dots 0}, a_{k_0 0\dots 0}, a_{0 k_1 \dots 0},
      \dots, a_{k_0k_1\dots k_{n-1}}).
\]
Note that if $k_i = 0$ then $\face_i^0 \tup a^{\tup k} = \face_i^1 \tup a^{\tup
k} = \face_i^0 \tup a^{k_0\dots k_{i-1}1k_{i+1}\dots k_{n-1}}$. Suppose that the
tuple $\tup a$ satisfies the premise of the claim. The fact that $\tup a^{\tup k} \in
\Delta(\alpha_0,\dots,\alpha_{n-1})$ for all $\tup k \neq 11\dots 1$ follows
from induction on the number of $0$'s in $\tup k$---the base
step is given by Lemma~\ref{delta-is-a-congruence}(ii) and the assumption; the
induction step is given by Lemma~\ref{delta-is-a-congruence}(i) and
(ii) and the above observation.  Next we claim that
\begin{equation} \label{eq:key.1}
  q_n^{\al A\!^{2^n}}
    (\tup a^{00\dots 0},\dots,\tup a^{01\dots 1}) =
    (a_{00\dots 0},\dots,a_{01\dots 1},
      q_n^{\al A}(a_{00\dots 0},\dots,a_{01\dots 1}));
\end{equation}
i.e., we have to show that
\[
  q_n ( a_{00\dots 0}, a_{j_00\dots 0}, \dots, a_{0j_1\dots j_{n-1}} )
    = a_{\tup j}
\]
for every coordinate $\tup j \neq 11\dots 1$.  The above is trivial for the
coordinate $\tup j = 11\dots 1$;  for $\tup j$ with $j_i = 0$ the identity
follows from the $i$-th identity for a~strong cube term.  Finally, since the
relation $\Delta(\alpha_0,\dots,\alpha_{n-1})$ is compatible with $q_n$ we
know that the right hand side of~(\ref{eq:key.1}) is in
$\Delta(\alpha_0,\dots,\alpha_{n-1})$.

\begin{claim} 
If $\tup a \in \Delta(\alpha_0,\dots,\alpha_{n-1})$ then
$q_n(a_{00\dots 0},\dots,a_{01\dots 1})
\equiv_{[\alpha_0,\dots,\alpha_{n-1}]} a_{11\dots 1}$.
\end{claim}

If $(a_{00\dots 0},\dots,a_{11\dots 1}) \in \Delta(\alpha_0,\dots,\alpha_{n-1})$
then from Lemma~\ref{delta-is-a-congruence}(i) we know that the tuple
\(
  (a_{00\dots 0},\dots,a_{01\dots 1}, q_n(a_{00\dots0},\dots,a_{01\dots1}))
\)
satisfies the prerequisites of the first claim. Hence, we know that
\[
  (a_{00\dots 0},\dots,a_{01\dots 1}, q_n(a_{00\dots 0},\dots,a_{01\dots1}))
    \in \Delta(\alpha_0,\dots,\alpha_{n-1})
\]
which shows that
\[
  (a_{11\dots 1},q_n(a_{00\dots 0},\dots,a_{01\dots1})) \in
    \psi_{11\dots 1}(\Delta(\alpha_0,\dots,\alpha_{n-1})) =
    [\alpha_0,\dots,\alpha_{n-1}].
\]

Finally, we get to the statement of this lemma. The `only if' part is Claim~2
together with Lemma~\ref{delta-is-a-congruence}(i).  For the `if' part, if
\[
  \face_i^0 \tup a \in
  \Delta(\alpha_0,\dots,\alpha_{i-1},\alpha_{i+1},\dots,\alpha_{n-1})
\]
for all $i<n$, we know from Claim~1 that
\[
  (a_{00\dots 0},\dots,a_{01\dots 1}, q_n(a_{00\dots 0},\dots,a_{01\dots1}))
    \in \Delta(\alpha_0,\dots,\alpha_{n-1}).
\]
From the last condition and
Theorem~\ref{description-of-higher-commutators}, we know that
$q_n(a_{00\dots 0},\dots,a_{01\dots1})$ and $a_{11\dots 1}$
are congruent modulo ${\psi_{11\dots 1} (\Delta(\alpha_0,\dots,\alpha_{n-1}))}$.
Therefore, $\tup a \in \Delta(\alpha_0,\dots,\alpha_{n-1})$ from Lemma~\ref{forks}.
\end{proof}

In the rest of this chapter we use symbol $[\alpha_0,\dots,\alpha_{n-1}]_{\al
X}$ to denote the commutator $[\alpha_0,\dots,\alpha_{n-1}]$ computed in
the~algebra $\al X$.

\begin{corollary} \label{delta-gives-commutators} 
Let $\al A$, $\al B$ are algebras that share a~universe, a~Mal'cev
operation, and congruences $\alpha_0,\dots,\alpha_{n-1}$. Then
\[
  [\alpha_{i_0},\dots,\alpha_{i_{k-1}}]_{\al A}
    = [\alpha_{i_0},\dots,\alpha_{i_{k-1}}]_{\al B}
\]
for all $k\leq n$ and $0\leq i_0 < \dots < i_{k-1} < n$ if and only if
\(
  \Delta_{\al A}(\alpha_0,\dots,\alpha_{n-1})
    = \Delta_{\al B} (\alpha_0,\dots,\alpha_{n-1})
\).
\end{corollary}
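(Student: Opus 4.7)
The plan is to prove the two directions separately. The forward direction is an immediate consequence of Theorem~\ref{commutators-are-forks} once we observe that smaller-arity $\Delta$'s are obtained by hyperface projection. The reverse direction goes by induction on $n$, using Lemma~\ref{delta-contains-graph-of-cube-term} to reduce the equality of $\Delta$ at arity $n$ to the equality of $\Delta$'s at arity $n-1$ together with the equality of the top commutator.

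\emph{Forward direction.} Assuming $\Delta_{\al A}(\alpha_0,\dots,\alpha_{n-1}) = \Delta_{\al B}(\alpha_0,\dots,\alpha_{n-1})$, Theorem~\ref{commutators-are-forks} gives $[\alpha_0,\dots,\alpha_{n-1}]_{\al A} = [\alpha_0,\dots,\alpha_{n-1}]_{\al B}$. For a proper subset $I \subsetneq \{0,\dots,n-1\}$, I would iteratively apply Lemma~\ref{delta-is-a-congruence}.(i), deleting one index at a time, to obtain $\Delta_{\al A}(\alpha_i \midbar i \in I) = \Delta_{\al B}(\alpha_i \midbar i \in I)$; a second application of Theorem~\ref{commutators-are-forks} at the smaller arity then yields equality of the corresponding sub-commutator. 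The empty-index case is trivial since $\Delta() = A$ in both algebras.

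\emph{Reverse direction.} Induct on $n$. For $n \leq 1$, $\Delta() = A$ and $\Delta(\alpha_0) = \alpha_0$, neither depending on the algebra. For $n \geq 2$, invoke Lemma~\ref{existence-of-strong-cube-terms} to fix a strong $n$-cube term $q_n$; the recursion in the proof of that lemma expresses $q_n$ as an explicit composition of the shared Mal'cev term $q$, so $q_n \in \Clo \al A \cap \Clo \al B$. Lemma~\ref{delta-contains-graph-of-cube-term} then characterises $\tup a \in \Delta_{\al A}(\alpha_0,\dots,\alpha_{n-1})$ by the conjunction of (a) for each $j<n$, the hyperface projection $(a_k \midbar k_{(j)}=0)$ lies in $\Delta_{\al A}(\alpha_i \midbar i \neq j)$, and (b) $q_n(a_0,\dots,a_{2^n-2}) \equiv_{[\alpha_0,\dots,\alpha_{n-1}]_{\al A}} a_{2^n-1}$, with the analogous characterisation for $\al B$. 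Condition (a) is the same in the two algebras by the inductive hypothesis applied to each $(n-1)$-element sub-list (whose own sub-commutators again agree by assumption), and condition (b) is the same by the assumed equality of $[\alpha_0,\dots,\alpha_{n-1}]$. Hence $\Delta_{\al A}(\alpha_0,\dots,\alpha_{n-1}) = \Delta_{\al B}(\alpha_0,\dots,\alpha_{n-1})$.

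The only delicate point is making sure Lemma~\ref{delta-contains-graph-of-cube-term} is applied with a \emph{common} strong $n$-cube term for $\al A$ and $\al B$; this is handled by the observation above that the term supplied by Lemma~\ref{existence-of-strong-cube-terms} is term-definable from $q$ alone, so lies in both clones as soon as $q$ does. With this in hand the induction is mechanical: the two characterisations provided by Lemma~\ref{delta-contains-graph-of-cube-term} literally coincide, and no further case analysis is needed.
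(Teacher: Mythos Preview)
Your proposal is correct and follows essentially the same approach as the paper: one direction uses Lemma~\ref{delta-is-a-congruence}(i) to project to smaller $\Delta$'s and then Theorem~\ref{commutators-are-forks}, while the other direction proceeds by induction on $n$ via the characterisation in Lemma~\ref{delta-contains-graph-of-cube-term}. Your explicit justification that the strong $n$-cube term produced by Lemma~\ref{existence-of-strong-cube-terms} is a composition of the shared Mal'cev term $q$---and hence lies in both clones---is a point the paper leaves implicit, so your write-up is in fact slightly more careful there.
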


\begin{proof} We will prove the corollary by induction on $n$. The case $n=1$
is trivial; for the induction step suppose that for all congruences
$\beta_0,\dots,\beta_{n-1} \in \Con \al A \cap \Con \al B$ such that the
commutators $[\beta_{i_0},\dots,\beta_{i_{k-1}}]$ agree in $\al A$ and $\al B$
for all $k<n$ and $i_0,\dots,i_{k-1}$ pairwise distinct elements from
$\{0,\dots,n-1\}$, we have
\(
  \Delta_{\al A}(\beta_{i_0},\dots,\beta_{i_{k-1}})
  = \Delta_{\al B} (\beta_{i_0},\dots,\beta_{i_{k-1}})
\).
In particular, we have
$\Delta_{\al A}(\alpha_0,\dots,\alpha_{i-1},\alpha_{i+1},\dots,\alpha_{n-1})
  = \Delta_{\al B}(\alpha_0,\dots,\alpha_{i-1},\alpha_{i+1},\dots,\alpha_{n-1})$
for all $i<n$.
Let $q_n$ be a~common strong $n$-cube operation of $\al A$ and $\al B$ (it
can be derived from the~common Mal'cev operation). From
Lemma~\ref{delta-contains-graph-of-cube-term} we know that whenever $\al X$ is
a~Mal'cev algebra then $\tup a \in
\Delta_{\al X}(\alpha_0,\dots,\alpha_{n-1})$ if and only if
\begin{multline} \label{eq:lem4.2}
  \face_j^0 \tup a \in \Delta_{\al X}
    (\alpha_0,\dots,\alpha_{i-1},\alpha_{i+1},\dots,\alpha_{n-1})
    \text{ for all $j<n$, and }\\
  a_{2^n-1} \equiv_{[\alpha_0,\dots,\alpha_{n-1}]_{\al X}}
    q_n(a_0,\dots,a_{2^n-2}).
\end{multline}
Now, suppose that $\tup a \in \Delta_{\al A}(\alpha_0,\dots,\alpha_{n-1})$ hence
(\ref{eq:lem4.2}) is valid for $\al X = \al A$. But since the operation $q_n$,
the commutators $[\alpha_0,\dots,\alpha_{n-1}]$, and the relations
$\Delta(\alpha_0,\dots,\alpha_{i-1},$ $\alpha_{i+1},\dots,\alpha_{n-1})$ agree in
$\al A$ and $\al B$, we get that (\ref{eq:lem4.2}) is also true for $\al X = \al
B$, hence $a \in \Delta_{\al B}(\alpha_0,\dots,\alpha_{n-1})$. This shows that
\(
  \Delta_{\al A}(\alpha_0,\dots,\alpha_{n-1}) \subseteq
  \Delta_{\al B}(\alpha_0,\dots,\alpha_{n-1}).
\)
The other inclusion is analogous.

For the `only if' part, suppose that $\Delta_{\al A}
(\alpha_0,\dots,\alpha_{n-1}) = \Delta_{\al B}(\alpha_0,\dots,\alpha_{n-1})$.
From Lemma \ref{delta-is-a-congruence}(i) it follows that
\(
  \Delta_{\al A}(\alpha_{i_0},\dots,\alpha_{i_{k-1}}) =
  \Delta_{\al B}(\alpha_{i_0},\dots,\alpha_{i_{k-1}})
\)
for all $\{ i_0, \dots, i_{k-1} \} \subseteq \{0,\dots,n-1\}$ with $i$'s
pairwise distinct; and consequently from Theorem
\ref{description-of-higher-commutators},
$[\alpha_{i_0},\dots,\alpha_{i_{k-1}}]_{\al A} =
 [\alpha_{i_0},\dots,\alpha_{i_{k-1}}]_{\al B}$.
\end{proof}

Finally, we get to the proof of the Theorem
\ref{functions-preserving-n-ary-commutator}. We restate the theorem once again.

\theoremstyle{plain}
\newtheorem*{functions-preserving-n-ary-commutators}
  {Theorem \ref{functions-preserving-n-ary-commutator}}

\begin{functions-preserving-n-ary-commutators}
Let $\al A$ be an~algebra with Mal'cev term $q$, and
$\alpha_0,\dots,\alpha_{n-1}$ be congruences of~$\al A$. Then there exists
a~largest clone $\mathcal C$ on~$A$ containing~$q$ such that it preserves
congruences $\alpha_0,\dots,\alpha_{n-1}$, and all commutators of the form
$[\alpha_{i_0},\dots,\alpha_{i_k}]$ where $0\leq i_0 < \dots < i_{k-1} < n$
agree in $\al A$ and $(A,\mathcal C)$.
\end{functions-preserving-n-ary-commutators}

\begin{proof}
We will show that the largest clone satisfying the required properties is the clone
$\mathcal C$ of all polymorphisms of the relation $\Delta_{\al
A}(\alpha_0,\dots,\alpha_{n-1})$. Obviously $\mathcal C \supseteq \Clo \al A$
which implies that $q \in \mathcal C$ and
\(
  \Delta_{\mathcal C}(\alpha_0,\dots,\alpha_{n-1}) \supseteq 
    \Delta_{\al A}(\alpha_0,\dots,\alpha_{n-1}).
\)
But since the relation $\Delta_{\al A} (\alpha_0,\dots,\alpha_{n-1})$ is
compatible with
$\mathcal C$, and the relation $\Delta_{\mathcal C}(\alpha_0,\dots,\alpha_{n-1})$ is the
smallest compatible relation containing $\cube_i^n(a,b)$ for all
$a\equiv_{\alpha_i} b$ and $i<n$, we have 
\(
  \Delta_{\mathcal C}(\alpha_0,\dots,\alpha_{n-1}) =
  \Delta_{\al A}(\alpha_0,\dots,\alpha_{n-1}).
\)

From Corollary~\ref{delta-gives-commutators}, we know that $\mathcal C$
satisfies the specified property. The rest is to prove that $\mathcal C$ is
the largest such clone.  Let $\mathcal B$ be another clone satisfying the property.
Then from the same corollary we get
\(
  \Delta_{\al A}(\alpha_0,\dots,\alpha_{n-1}) =
  \Delta_{\mathcal B}(\alpha_0,\dots,\alpha_{n-1}),
\)
and consequently $\mathcal B \subseteq \Pol (\Delta_{\al A}
(\alpha_0,\dots,\alpha_{n-1})) = \mathcal C$.
\end{proof}
 \section{Proofs of basic properties of higher commutators}

In this chapter we will present alternative proofs of basic properties of
higher commutators formulated in \cite{bul01,am10}. For an arbitrary algebra
$\al A$ and its congruences $\alpha_0,\dots,\alpha_{n-1}$,
$\beta_0,\dots,\beta_{n-1}$, $\gamma$, and $\eta$ the following is
satisfied
\begin{enumerate}
  \item[(HC1)] $[\alpha_0,\dots,\alpha_{n-1}]
    \leq \alpha_0 \meet \dots \meet \alpha_{n-1}$;
  \item[(HC2)] if $\alpha_i\leq \beta_i$ for all $i$ then
    $[\alpha_0,\dots,\alpha_{n-1}] \leq [\beta_0, \dots, \beta_{n-1}]$;
  \item[(HC3)] $[\alpha_0,\dots,\alpha_{n-1}] \leq
    [\alpha_1, \dots, \alpha_{n-1}]$.
\end{enumerate}
Furthermore, if $\al A$ is a~Mal'cev algebra then
\begin{enumerate}
  \item [(HC4)] if $\sigma$ is a~permutation on the set $\{0,\dots,n-1\}$ then
    \[
      [\alpha_0,\dots,\alpha_{n-1}] =
      [\alpha_{\sigma(0)},\dots,\alpha_{\sigma(n-1)}];
    \]
  \item[(HC5)] congruences $\alpha_0,\dots,\alpha_{n-2}$ centralize
    $\alpha_{n-1}$ modulo $\gamma$ if and only if
    \[
      [\alpha_0,\dots,\alpha_{n-1}] \leq \gamma;
    \]
  \item[(HC6)] if $\eta \leq \alpha_0,\dots,\alpha_{n-1}$ then
    \[
      [\alpha_0/\eta,\dots,\alpha_{n-1}/\eta]_{\al A/\eta} =
      ( [\alpha_0,\dots,\alpha_{n-1}]_{\al A} \join \eta ) / \eta;
    \]
  \item[(HC7)] if $I$ is a~non-empty set, and $\rho_i$ are congruences
    of~$\al A$ for all $i\in I$ then
    \[
      \bigjoin_{i\in I} [\alpha_0,\dots,\alpha_{n-2},\rho_i] =
      [\alpha_0,\dots,\alpha_{n-2},\bigjoin_{i\in I} \rho_i];
    \]
  \item[(HC8)] if $i = 1,\dots,n-1$ then
    \(
      [[\alpha_0,\dots,\alpha_{i-1}],\alpha_i,\dots,\alpha_{n-1}] \leq
      [\alpha_0,\dots,\alpha_{n-1}].
    \)
\end{enumerate}

Although properties (HC1--3) can be derived directly from Theorem
\ref{description-of-higher-commutators} and the definition of
$\Delta(\alpha_0,\dots,\alpha_{n-1})$, we will omit these proofs because methods
would work only for Mal'cev algebras; the general case have been proved in
\cite[Proposition~1.3]{mud09}.  We will prove properties (HC4), (HC5), (HC7),
and (HC8)---the last property (HC6) is a~corollary of (HC5).

\begin{proposition}[HC4, {\cite[Proposition 6.1]{am10}}] 
Let $\al A$ be a~Mal'cev algebra and let
$\alpha_0,\dots,\alpha_{n-1} \in \Con \al A$.
Then
 $[\alpha_0,\dots,\alpha_{n-1}] =
  [\alpha_{\sigma(0)},\dots,\alpha_{\sigma(n-1)}]$ for each permutation
  $\sigma$ of~$\{0,\dots,n-1\}$.
\end{proposition}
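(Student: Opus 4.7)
The strategy is to read (HC4) off Theorem~\ref{description-of-higher-commutators} by observing that a permutation of the congruences $\alpha_0,\dots,\alpha_{n-1}$ corresponds to a bit-permutation of the $2^n$ coordinates of $\Delta$ that happens to fix the ``top'' index $2^n-1$, and that fork-sets are preserved under bijective coordinate permutations by Lemma~\ref{forks-of-projections}(iii).

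Given a permutation $\sigma$ of $\{0,\dots,n-1\}$, I would define a bijection $\tau\colon \{0,\dots,2^n-1\} \to \{0,\dots,2^n-1\}$ by requiring that $\tau(k)_{(i)} = k_{(\sigma(i))}$ for every $i<n$; equivalently, $\tau$ permutes the binary digits according to $\sigma$. Directly from $\tup c_i^n(a,b)_{(k)} = (a,b)_{(k_{(i)})}$, one computes
\[
  (\tup c_i^n(a,b))^\tau_{(k)}
    = (a,b)_{(\tau(k)_{(i)})}
    = (a,b)_{(k_{(\sigma(i))})}
    = \tup c_{\sigma(i)}^n(a,b)_{(k)}.
\]
Since $\Delta(\alpha_0,\dots,\alpha_{n-1})$ is the subalgebra generated by the tuples $\tup c_i^n(a,b)$ with $i<n$ and $a \equiv_{\alpha_i} b$, and since $(\cdot)^\tau$ sends these generators bijectively to the generators of $\Delta(\alpha_{\sigma^{-1}(0)},\dots,\alpha_{\sigma^{-1}(n-1)})$, this yields
\[
  \Delta(\alpha_0,\dots,\alpha_{n-1})^\tau
    = \Delta(\alpha_{\sigma^{-1}(0)},\dots,\alpha_{\sigma^{-1}(n-1)}).
\]

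Since every bit of $2^n-1$ equals~$1$, one has $\tau(2^n-1) = 2^n-1$. Applying Lemma~\ref{forks-of-projections}(iii) with $e = \tau$ and $j = 2^n-1$ then gives
\[
  \psi_{2^n-1}\bigl(\Delta(\alpha_0,\dots,\alpha_{n-1})\bigr)
    = \psi_{2^n-1}\bigl(\Delta(\alpha_{\sigma^{-1}(0)},\dots,\alpha_{\sigma^{-1}(n-1)})\bigr),
\]
and by Theorem~\ref{description-of-higher-commutators} each side equals the corresponding commutator. Thus $[\alpha_0,\dots,\alpha_{n-1}] = [\alpha_{\sigma^{-1}(0)},\dots,\alpha_{\sigma^{-1}(n-1)}]$, and since $\sigma^{-1}$ ranges over all permutations of $\{0,\dots,n-1\}$ as $\sigma$ does, this is precisely (HC4).

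No step is a real obstacle: the Mal'cev hypothesis enters only through Theorem~\ref{description-of-higher-commutators}, and the single calculation that needs care is the identification $(\tup c_i^n(a,b))^\tau = \tup c_{\sigma(i)}^n(a,b)$, which is immediate once the bit-permutation $\tau$ is written down.
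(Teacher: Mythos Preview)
Your proof is correct and follows essentially the same approach as the paper: both define a bit-permutation of the $2^n$ coordinates induced by $\sigma$, show that it carries $\Delta(\alpha_0,\dots,\alpha_{n-1})$ to the permuted $\Delta$, observe that the index $2^n-1$ is fixed, and then conclude via Lemma~\ref{forks-of-projections}(iii) and Theorem~\ref{description-of-higher-commutators}. The only cosmetic difference is that you verify the coordinate permutation on the generators $\tup c_i^n(a,b)$, whereas the paper verifies it using the join description $\Delta = \bigjoin_i \alpha_i^{d_{i,n}}$; these are equivalent by Definition~\ref{def:delta}.
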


\begin{proof}
We claim that the relations
$\Delta(\alpha_{\sigma(0)},\dots,\alpha_{\sigma(n-1)})$  and
$\Delta(\alpha_0,\dots,\alpha_{n-1})$ are identical up to permuting coordinates;
precisely
\[
  \Delta(\alpha_{\sigma(0)},\dots,\alpha_{\sigma(n-1)}) =
  \Delta(\alpha_0,\dots,\alpha_{n-1})^{\sigma'}
\]
where $\sigma'$ is defined by
\(
  \sigma'(\tup k) = k_{\sigma^{-1}(0)}\dots k_{\sigma^{-1}(n-1)}
\).
One can check this fact by observing that $\sigma'(\tup k) (\sigma(i)) =
k_{i}$, and consequently
\begin{multline*}
\Delta(\alpha_{\sigma(0)},\dots,\alpha_{\sigma(n-1)})
= \bigjoin_{i<n} \cube_i^n ( \alpha_{\sigma(i)} )
= \bigjoin_{i<n}
  \left( \cube_{\sigma(i)}^n (\alpha_{\sigma(i)}) \right)^{\sigma'} \\
= \bigl( \bigjoin_{i<n} \cube_{i,n}(\alpha_{i}) \bigr)^{\sigma'}
= \Delta(\alpha_0,\dots,\alpha_{n-1})^{\sigma'}.
\end{multline*}
Finally, $\sigma'(11\dots 1) = 11\dots 1$, so
the statement is true from Theorem \ref{description-of-higher-commutators} and
Lemma \ref{forks-of-projections}(ii). 
\end{proof}

\begin{proposition}[HC5, {\cite[Lemma 6.2]{am10}}] 
Let $\al A$ be a~Mal'cev algebra and $\alpha_0,\dots,\alpha_{n-1}$, $\gamma$ be
congruences of $\al A$. Then $\alpha_0,\dots,\alpha_{n-2}$ centralizes
$\alpha_{n-1}$ modulo $\gamma$ if and only if
\(
  \gamma \geq [\alpha_0,\dots,\alpha_{n-1}].
\)
\end{proposition}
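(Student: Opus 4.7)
The ``only if'' direction (centralization modulo~$\gamma$ implies $\gamma \geq [\alpha_0,\dots,\alpha_{n-1}]$) is immediate from the definition of $[\alpha_0,\dots,\alpha_{n-1}]$ as the smallest centralizing congruence, so the task is to prove the converse. Applying Lemma~\ref{term-condition} reduces this to showing that for every $\tup a \in \Delta(\alpha_0,\dots,\alpha_{n-1})$ with $a_i \equiv_\gamma a_{i+2^{n-1}}$ for all $i < 2^{n-1}-1$, one also has $a_{2^{n-1}-1} \equiv_\gamma a_{2^n-1}$.

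The plan is to exploit a strong $n$-cube term~$q_n$, whose existence is guaranteed by Lemma~\ref{existence-of-strong-cube-terms}. On the one hand, Lemma~\ref{delta-contains-graph-of-cube-term} applied to~$\tup a$ gives
\[
  q_n(a_0,\dots,a_{2^n-2}) \equiv_{[\alpha_0,\dots,\alpha_{n-1}]} a_{2^n-1},
\]
and since $[\alpha_0,\dots,\alpha_{n-1}] \leq \gamma$ by assumption the same relation holds modulo~$\gamma$. On the other hand, specializing the defining identity of~$q_n$ in the direction~$n-1$---the argument list together with $a_{2^{n-1}-1}$ placed at position~$2^n-1$ is constant along the $(n-1)$-st bit---yields
\[
  q_n(a_0,\dots,a_{2^{n-1}-1},a_0,\dots,a_{2^{n-1}-2}) = a_{2^{n-1}-1}.
\]

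The two evaluations of~$q_n$ share their first $2^{n-1}$ arguments, while at positions $2^{n-1},\dots,2^n-2$ they differ precisely at the pairs $(a_{2^{n-1}+j},a_j)$ with $j < 2^{n-1}-1$, which are $\gamma$-equivalent by hypothesis. Because $q_n$ is a term operation of~$\al A$ and $\gamma$ is a congruence, the two outputs are $\gamma$-equivalent, and chaining the three relations gives $a_{2^n-1} \equiv_\gamma a_{2^{n-1}-1}$ as required. The main obstacle is recognising that the strong cube identity can be specialized so as to produce an evaluation of~$q_n$ equal on the nose to $a_{2^{n-1}-1}$ and $\gamma$-close to the evaluation supplied by Lemma~\ref{delta-contains-graph-of-cube-term}; once this pairing is in place the conclusion follows from a single application of the congruence property of~$\gamma$.
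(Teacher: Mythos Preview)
Your proof is correct and follows essentially the same approach as the paper: both reduce via Lemma~\ref{term-condition}, use Lemma~\ref{delta-contains-graph-of-cube-term} to relate $a_{2^n-1}$ and $q_n(a_0,\dots,a_{2^n-2})$ modulo the commutator, invoke the strong cube identity in the direction $n-1$ to evaluate $q_n(a_0,\dots,a_{2^{n-1}-1},a_0,\dots,a_{2^{n-1}-2}) = a_{2^{n-1}-1}$, and compare the two evaluations of $q_n$ modulo~$\gamma$. The only cosmetic difference is that the paper phrases the final chaining as a single congruence modulo $[\alpha_0,\dots,\alpha_{n-1}] \join \gamma = \gamma$.
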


\begin{proof}
The `only if' part is given by the definition of the commutator, to prove the
`if' part suppose that $\tup a\in \Delta(\alpha_0,\dots,\alpha_{n-1})$ such that
$a_{k_0\dots k_{n-2}0} \equiv_\gamma a_{k_0\dots k_{n-2}1}$ for all
$k\in2^{n-1} \setminus \{1\dots 1\}$. We want to prove that
$a_{11\dots10} \equiv_\gamma a_{11\dots 1}$. By Lemma
\ref{delta-contains-graph-of-cube-term} we know that
\[
  a_{11\dots 1} \equiv_{[\alpha_0,\dots,\alpha_{n-1}]}
    q_n(a_{00\dots 0},\dots,a_{01\dots 1})
\]
but the right hand side is modulo $\gamma$ congruent to
\[
  q_n(a_{00\dots 0},\dots,a_{01\dots 10},a_{1\dots 10},
      a_{00\dots 0},\dots,a_{01\dots 10}) = a_{1\dots 10}.
\]
So, we know that $a_{1\dots 11} \equiv a_{1\dots 10}$ modulo
$\gamma$ since $\gamma \geq [\alpha_0,\dots,\alpha_{n-1}]$. And finally,
$\alpha_0,\dots,\alpha_{n-2}$ centralizes $\alpha_{n-1}$ modulo $\gamma$ from
Lemma~\ref{term-condition}.
\end{proof}

The condition (HC6) is a~direct corollary of~condition (HC5); for a~proof see
\cite[Corollary 6.3]{am10}. The following two lemmata prepare for the proof
of~(HC7).

\begin{lemma} \label{join-of-deltas} 
Let $\al A$ be an~algebra, $I$ a~non-empty set, $\rho_i \in \Con \al A$
for all $i\in I$, and $\alpha_0,\dots,\alpha_{n-2} \in \Con \al A$. Then
\[
  \Delta(\alpha_0,\dots,\alpha_{n-2},\bigjoin_{i \in I} \rho_i) =
    \bigjoin_{i \in I} \Delta(\alpha_0,\dots,\alpha_{n-2},\rho_i).
\]
\end{lemma}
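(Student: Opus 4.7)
\end{lemma}

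\begin{proof}[Proof sketch]
The inclusion $\supseteq$ is immediate: $\rho_{i_0} \leq \bigjoin_{i\in I}\rho_i$ implies that every generator of $\Delta(\alpha_0,\dots,\alpha_{n-2},\rho_{i_0})$ is also a generator of the LHS, hence each summand, and therefore their subalgebra-join, sits inside the LHS.

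For $\subseteq$, both sides are subalgebras of $\al A^{2^n}$, so it suffices to check that every generator of $\Delta(\alpha_0,\dots,\alpha_{n-2},\bigjoin_i \rho_i)$ lies in the RHS. Generators of the form $\tup c_j^n(a,b)$ with $j<n-1$ and $a\equiv_{\alpha_j}b$ are simultaneously generators of every $\Delta(\alpha_0,\dots,\alpha_{n-2},\rho_i)$, so they belong to the RHS. The remaining generators are $\tup c_{n-1}^n(a,b)$ with $(a,b) \in \bigjoin_i \rho_i$, and the plan is to handle these by induction on the length of a chain $a = y_0, y_1, \dots, y_s = b$ with $(y_k, y_{k+1}) \in \rho_{i(k)}$ for some $i(k) \in I$.

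For $s = 1$, $\tup c_{n-1}^n(a,b)$ is directly a generator of $\Delta(\alpha_0,\dots,\alpha_{n-2},\rho_{i(0)})$. For $s > 1$, use a Mal'cev term $q$ of $\al A$ to write
\[
\tup c_{n-1}^n(y_0, y_s) = q\bigl(\tup c_{n-1}^n(y_0, y_1),\, (y_1,\dots,y_1),\, \tup c_{n-1}^n(y_1, y_s)\bigr),
\]
which holds coordinate-wise because on positions $k$ with $k_{(n-1)} = 0$ the Mal'cev identity $q(y_0, y_1, y_1) = y_0$ applies, while on positions with $k_{(n-1)} = 1$ we get $q(y_1, y_1, y_s) = y_s$. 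The first argument lies in $\Delta(\alpha_0,\dots,\alpha_{n-2},\rho_{i(0)})$; the diagonal tuple $(y_1,\dots,y_1)$ lies in every $\Delta(\alpha_0,\dots,\alpha_{n-2},\rho_i)$ as a generator $\tup c_j^n(y_1,y_1)$ by reflexivity of the congruences; and the third argument belongs to the RHS by the inductive hypothesis.

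The main step is the Mal'cev identity above: it shortens the chain by one, performing in a single application the entire work of lifting step-wise generators into a generator of the LHS. Although the statement does not explicitly assume a Mal'cev term, its existence is tacitly needed here, in line with the other Mal'cev-specific results (HC4)--(HC8) of this section.
\end{proof}
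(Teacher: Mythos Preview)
Your argument is correct. The paper takes a shorter route: it simply unfolds the definition $\Delta(\alpha_0,\dots,\alpha_{n-2},\beta) = \alpha_0^{d_{0,n}} \vee \cdots \vee \alpha_{n-2}^{d_{n-2,n}} \vee \beta^{d_{n-1,n}}$ with $\beta = \bigjoin_i \rho_i$ and then regroups the joins by associativity---no chains, no induction, no explicit Mal'cev term.

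Your remark that a Mal'cev term is tacitly required is well taken, and it applies to the paper's calculation as well. The first line of that calculation silently replaces $\bigl(\bigjoin_i \rho_i\bigr)^{d_{n-1,n}}$, where the inner join is the \emph{congruence} join in $\Con\al A$, by $\bigjoin_i \rho_i^{d_{n-1,n}}$, a \emph{subalgebra} join in $\al A^{2^n}$. Since $(a,b)\mapsto (a,b)^{d_{n-1,n}}$ embeds $\al A^2$ isomorphically into $\al A^{2^n}$, this step amounts to identifying the congruence join of the $\rho_i$ with their subalgebra join in $\al A^2$---true in a Mal'cev algebra by Lemma~\ref{reflexive-relation}, but false in general (for a bare set and $n=1$ the lemma would assert that $\bigcup_i\rho_i$ is already transitive). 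Your chain-plus-Mal'cev induction is precisely an explicit verification of this hidden step; once that step is granted, the paper's one-line rearrangement of joins is the more economical packaging.
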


\begin{proof} The statement can be derived directly from the definition
of~the relation $\Delta(\alpha_0,\dots,\alpha_{n-1})$ by a~simple calculation:
\begin{align*}
  \Delta(\alpha_0,\dots,\alpha_{n-2},\bigjoin_{i \in I} \rho_i) &=
    \Bigl( \cube_0^n\alpha_0 \join \dots
      \join \cube_{n-2}^n \alpha_{n-2}
      \join \cube_{n-1}^n \bigjoin_{i \in I} \rho_i \Bigr) \\
    &= \bigjoin_{i \in I} 
    \bigl( \cube_0^n \alpha_0 \join \dots
      \join \cube_{n-2}^n \alpha_{n-2}
      \join \cube_{n-1}^n \rho_i \bigr) \\
    &= \bigjoin_{i \in I} \Delta(\alpha_0,\dots,\alpha_{n-2},\rho_i). \qedhere
\end{align*}
\end{proof}

The following lemma was in fact proved during the proof
of~\cite[Lemma~6.4]{am10}.

\begin{lemma} \label{commutator-is-continuous} 
Let $\al A$ be an~algebra, $I$ a~non-empty set, $\rho_i \in \Con \al A$
for all $i\in I$, and $\alpha_0,\dots,\alpha_{n-2} \in \Con \al A$. Then
\[
  [\alpha_0,\dots,\alpha_{n-2},\bigjoin_{i \in I} \rho_i] = 
    \bigjoin_{\substack{\{i_0,\dots,i_{k-1}\} \subseteq I, \\ k < \infty}} 
      [\alpha_0,\dots,\alpha_{n-2},
        \bigjoin_{i\in \{i_0,\dots,i_{k-1}\}} \rho_i].
\]
\end{lemma}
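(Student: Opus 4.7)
The plan is to prove both inclusions. The $\geq$ direction is immediate from monotonicity (HC2): for every finite $F\subseteq I$ one has $\bigjoin_{i\in F}\rho_i \leq \bigjoin_{i\in I}\rho_i$, hence $[\alpha_0,\dots,\alpha_{n-2},\bigjoin_{i\in F}\rho_i]\leq[\alpha_0,\dots,\alpha_{n-2},\bigjoin_{i\in I}\rho_i]$, and the join over all such $F$ inherits this bound.

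For the reverse inclusion, write $\beta:=\bigjoin_{i\in I}\rho_i$ and let $\gamma$ denote the right-hand side. I will verify directly that $\alpha_0,\dots,\alpha_{n-2}$ centralize $\beta$ modulo $\gamma$; since the commutator is by definition the smallest such congruence, this yields $[\alpha_0,\dots,\alpha_{n-2},\beta]\leq\gamma$. Fix witnesses to the term condition: tuples $\tup a_i\equiv_{\alpha_i}\tup b_i$ (for $i<n-1$), $\tup a_{n-1}\equiv_\beta \tup b_{n-1}$, and a term $t$ satisfying the mixed hypotheses modulo $\gamma$. The crucial step is a compactness-style localization. Each coordinate pair witnessing $\tup a_{n-1}\equiv_\beta \tup b_{n-1}$ is realized by a finite chain in $\bigcup_{i\in I}\rho_i$ and so uses only finitely many indices; similarly, each of the (at most $2^{n-1}-1$) mixed-pair congruences lies in $\gamma=\bigjoin_{F}[\alpha_0,\dots,\alpha_{n-2},\bigjoin_{i\in F}\rho_i]$ and therefore, by the chain characterization of congruence joins combined with monotonicity (HC2), already lies inside a single commutator $[\alpha_0,\dots,\alpha_{n-2},\bigjoin_{i\in F'}\rho_i]$ for some finite $F'\subseteq I$. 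Taking the union of all these finitely many finite index sets produces one finite $F^*\subseteq I$ for which $\tup a_{n-1}\equiv_{\bigjoin_{i\in F^*}\rho_i}\tup b_{n-1}$ and every mixed pair is related modulo the single commutator $[\alpha_0,\dots,\alpha_{n-2},\bigjoin_{i\in F^*}\rho_i]$.

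With the localization in hand, the hypotheses of the term condition for the centralization of $\bigjoin_{i\in F^*}\rho_i$ modulo $[\alpha_0,\dots,\alpha_{n-2},\bigjoin_{i\in F^*}\rho_i]$ are all satisfied, and this centralization is exactly the defining property of the latter commutator. Consequently $t(\tup b_0,\dots,\tup b_{n-2},\tup a_{n-1})$ and $t(\tup b_0,\dots,\tup b_{n-2},\tup b_{n-1})$ are congruent modulo $[\alpha_0,\dots,\alpha_{n-2},\bigjoin_{i\in F^*}\rho_i]\leq\gamma$, finishing the verification. The main obstacle is precisely the localization step: although $\gamma$ is assembled from infinitely many commutators, every instance of the term condition involves only finite data and can be captured inside a single finite $F^*$. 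Avoiding this step and trying to apply centralization modulo $\gamma$ directly would require monotonicity of centralization in the modulus, which is essentially HC5 and is not available in the non-Mal'cev generality in which the lemma is stated.
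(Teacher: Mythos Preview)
Your proof is correct and follows essentially the same approach as the paper: prove $\geq$ by (HC2), and for $\leq$ localize every instance of the term condition to a single finite $F^*\subseteq I$ and then invoke the defining centralization property for $\bigjoin_{i\in F^*}\rho_i$. The only cosmetic difference is that the paper routes the argument through the $\Delta$ relation---using Lemma~\ref{join-of-deltas} in place of your direct chain argument for $\tup a_{n-1}\equiv_\beta\tup b_{n-1}$, and the reformulated term condition (Lemma~\ref{term-condition}) in place of Definition~\ref{def:bulatov}---whereas you work directly with Bulatov's original definition; the compactness/localization core is identical.
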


\begin{proof}
Throughout the proof we will extensively use compactness of subuniverses of some
fixed algebra that is if $\al
A_i$ for $i\in J$ are subalgebras of some algebra $\al A$, and $a \in
\bigjoin_{i\in J} \al A_i$ then there exists a~finite set $K\subseteq J$ such
that $a \in \bigjoin_{i\in K} \al A_i$.
To shorten the notation let $\eta$ denotes the right hand side of the statement.
Hence
\[
  \eta = \bigjoin_{\substack{\{i_0,\dots,i_{k-1}\} \subseteq I, \\ k < \infty}}
    [\alpha_0,\dots,\alpha_{n-2},\bigjoin_{i\in \{i_0,\dots,i_{k-1}\}} \rho_i].
\]
First, we prove that $\alpha_0$, \dots, $\alpha_{n-2}$
centralize $\bigjoin_{i\in I} \rho_i$ modulo $\eta$.

\begin{claim}
  If $\tup a \in \Delta(\alpha_0,\dots,\alpha_{n-2},\bigjoin_{i\in I} \rho_i)$
  then there is a~finite set $S \subseteq I$ such that $\tup a \in
  \Delta(\alpha_0,\dots,\alpha_{n-2},\bigjoin_{i\in S} \rho_i)$.
\end{claim}

The claim follows from Lemma~\ref{join-of-deltas} and the note at the beginning
of this proof.

\begin{claim} If $a \equiv_\eta b$ then there is a~finite set
$T\subseteq I$ such that $a$ and $b$ are congruent modulo
$[\alpha_0,\dots,\alpha_{n-2}, \bigjoin_{i\in T} \rho_i]$.
\end{claim}

Again, there are finite sets $T_0$, \dots, $T_{k-1}$ such that $a$ and $b$ are
congruent modulo $\bigjoin_{j=0}^{k-1}
[\alpha_0,\dots,\alpha_{n-2},\bigjoin_{i \in T_j} \rho_i]$. And, 
\[
  \bigjoin_{j=0}^{k-1}
    [\alpha_0,\dots,\alpha_{n-2},\bigjoin_{i \in T_j} \rho_i]
    \leq [\alpha_0,\dots,\alpha_{n-2}, \bigjoin_{\rho \in T} \rho]
\]
where $T = \bigcup_{j=0}^{k-1} T_j$. Which completes the proof of the second claim.

Suppose that
$\tup a\in \Delta(\alpha_0,\dots,\alpha_{n-2},\bigjoin_{i\in I} \rho_i)$
and
$a_{k_0\dots k_{n-2}0} \equiv_\eta a_{k_0\dots k_{n-2}1}$
for all $\tup k\in 2^{n-1} \setminus \{1\dots 1\}$. Let $S$ be a~finite set from Claim~1, and let
  $T_{\tup k}$ be finite sets such that $a_{k_0\dots k_{n-2}0}$ and $a_{k_0\dots
  k_{n-2}1}$ are congruent modulo
$[\alpha_0,\dots,\alpha_{n-2},\bigjoin_{\rho\in T_{\tup k}} \rho]$; such sets exist
by Claim~2. Let $U = S \cup T_{0\dots 0} \cup \dots \cup T_{1\dots 10}$ (note that
$U$ is a~finite set) and $\eta' = \bigjoin_{i\in U} \rho_i$. Then
\begin{enumerate}
\item $\tup a \in \Delta(\alpha_0,\dots,\alpha_{n-2},\eta')$, and
\item $a_{k_0\dots k_{n-2}0} \equiv_{[\alpha_0,\dots,\alpha_{n-1},\eta']}
a_{k_0\dots k_{n-2}1}$ for all $k\in 2^{n-1} \setminus \{1\dots 1\}$.
\end{enumerate}
So, from the Lemma \ref{term-condition} we know that $a_{1\dots 10}
\equiv_{[\alpha_0,\dots,\alpha_{n-1},\eta']} a_{1\dots 11}$.  Finally,
$[\alpha_0,\dots,\alpha_{n-1},\eta'] \leq [\alpha_0,\dots,\alpha_{n-1},\eta]$
because $\eta' \leq \eta$. Hence $\alpha_0,\dots,\alpha_{n-2}$ centralize
$\bigjoin_{i\in I} \rho_i$ modulo $\eta$, and consequently
$[\alpha_0,\dots,\alpha_{n-2},\bigjoin_{i\in I} \rho_i] \leq \eta$.

The other inclusion is obvious from the fact that
\[
  [\alpha_0,\dots,\alpha_{n-2},\bigjoin_{i\in I} \rho_i] \geq 
      [\alpha_0,\dots,\alpha_{n-2},\bigjoin_{i\in J} \rho_{i}]
\]
for every finite set $J\subseteq I$. 
\end{proof}

\begin{lemma}[{\cite[Corollary 6.6]{am10}}] \label{hc7-finite-case}
Let $\al A$ be a~Mal'cev algebra and $\alpha_1,\dots,\alpha_{n-1}$, $\rho_1,\dots,\rho_k$ congruences of $\al A$. Then
\[
  [\alpha_0,\dots,\alpha_{n-2},\bigjoin_{i=1}^k \rho_i] =
  \bigjoin_{i=1}^k [\alpha_0,\dots,\alpha_{n-2},\rho_i].
\]
\end{lemma}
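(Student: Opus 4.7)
The plan is to induct on $k$, reducing to the case $k = 2$. The inductive step follows by writing $\rho_1\join\dots\join\rho_k = (\rho_1\join\dots\join\rho_{k-1})\join \rho_k$ and combining the $k=2$ case with the induction hypothesis for $k-1$. The inequality $\geq$ is immediate from (HC2), so the real content is to prove
\[
  [\alpha_0,\dots,\alpha_{n-2},\rho_1\join\rho_2]\leq \theta_1\join\theta_2,
\]
where $\theta_i := [\alpha_0,\dots,\alpha_{n-2},\rho_i]$. Set $\theta = \theta_1\join\theta_2$ and $\Delta_i := \Delta(\alpha_0,\dots,\alpha_{n-2},\rho_i)$.

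For the $k=2$ step, start with $(c,d) \in [\alpha_0,\dots,\alpha_{n-2},\rho_1\join\rho_2]$. Proposition~\ref{commutator-and-delta} supplies witnesses $c_0,\dots,c_{2^{n-1}-2}$ with
\(
  (c_0,\dots,c_{2^{n-1}-2},c,c_0,\dots,c_{2^{n-1}-2},d) \in \Delta(\alpha_0,\dots,\alpha_{n-2},\rho_1\join\rho_2),
\)
which by Lemma~\ref{join-of-deltas} equals $\Delta_1 \join \Delta_2$. Lemma~\ref{delta-is-a-congruence} identifies each $\Delta_i$ with a congruence $\delta_i$ on the subalgebra $\Delta(\alpha_0,\dots,\alpha_{n-2})$; this subalgebra inherits the Mal'cev term $q$, so its congruences permute and $\delta_1\join\delta_2 = \delta_1\circ\delta_2$. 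Hence there exists an intermediate $\tup u \in \Delta(\alpha_0,\dots,\alpha_{n-2})$ with
\[
  (c_0,\dots,c_{2^{n-1}-2},c,u_0,\dots,u_{2^{n-1}-1}) \in \Delta_1
  \quad\text{and}\quad
  (u_0,\dots,u_{2^{n-1}-1},c_0,\dots,c_{2^{n-1}-2},d) \in \Delta_2.
\]

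The decisive move is to apply the half-swap $\sigma_{n-1}$ from Lemma~\ref{delta-is-a-congruence}(iii) to the first of these, rewriting it as $(u_0,\dots,u_{2^{n-1}-1},c_0,\dots,c_{2^{n-1}-2},c) \in \Delta_1$. The two tuples now share their first $2^n-1$ entries. Applying Lemma~\ref{delta-contains-graph-of-cube-term} separately to each yields
\[
  q_n(u_0,\dots,u_{2^{n-1}-1},c_0,\dots,c_{2^{n-1}-2}) \equiv_{\theta_1} c
  \quad\text{and}\quad
  q_n(u_0,\dots,u_{2^{n-1}-1},c_0,\dots,c_{2^{n-1}-2}) \equiv_{\theta_2} d,
\]
so $c\equiv_{\theta_1\circ\theta_2} d$ and $(c,d)\in\theta$, as required.

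The main obstacle is recognising the correct normalization of the intermediate witness. A naive approach tries to adjust $\tup u$ coordinate-wise via Mal'cev tricks so that HC5 can be applied to $\Delta_1$ and $\Delta_2$ separately with matching first coordinates, which is technically awkward because the standard Mal'cev identities do not simplify the required expressions. Noticing that the half-swap $\sigma_{n-1}$ makes both $\Delta_1$- and $\Delta_2$-tuples evaluate the same cube-term expression is what collapses the argument.
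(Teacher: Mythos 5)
Your proof is correct and follows essentially the same route as the paper: reduce to $k=2$; view $\Delta_i$ as a congruence $\delta_i$ on the subalgebra $\Delta(\alpha_0,\dots,\alpha_{n-2})$ via Lemma~\ref{delta-is-a-congruence}; use Lemma~\ref{join-of-deltas} together with Mal'cev permutability to split $\Delta_1\join\Delta_2$; align the two tuples so they agree in the first $2^n-1$ coordinates; and extract $\theta_1$- and $\theta_2$-links via the cube-term Lemma~\ref{delta-contains-graph-of-cube-term}. The only difference is a small technical variation in the alignment step: you decompose $\delta_1\join\delta_2$ as $\delta_1\circ\delta_2$, producing tuples $(\tup e,c,\tup u)\in\Delta_1$ and $(\tup u,\tup e,d)\in\Delta_2$, and then need the half-swap $\sigma_{n-1}$ from Lemma~\ref{delta-is-a-congruence}(iii) to move $c$ to the last coordinate; the paper instead picks the (equivalent) decomposition ``$\exists\,\tup c:\ \tup a\;\delta_1\;\tup c$ and $\tup b\;\delta_2\;\tup c$'', which makes the two tuples $(\tup c,\tup e,a)$ and $(\tup c,\tup e,b)$ share their leading $2^n-1$ entries automatically, so no swap is needed. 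Both are fine; the paper's choice just saves one application of the symmetry.
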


\begin{proof} It suffices to prove the statement just for $k=2$. We will write
\[
  (a_0,\dots,a_{2^{n-1}-1}) \equiv_{\Delta(\alpha_0,\dots,\alpha_{n-2},\rho_i)}
  (b_0,\dots,b_{2^{n-1}-1}) 
\]
if $(a_0,\dots,a_{2^{n-1}-1}, b_0,\dots,b_{2^{n-1}-1}) \in
\Delta(\alpha_0,\dots,\alpha_{n-2},\rho_i)$. Note that from Lemma
\ref{delta-is-a-congruence}, we know that the binary relation
\(
  \{ (\tup a,\tup b) : \tup a
    \equiv_{\Delta(\alpha_0,\dots,\alpha_{n-2},\rho_i)} \tup b \}
\)
is a~congruence on $\Delta(\alpha_0,\dots,\alpha_{n-2})$. From
Lemma~\ref{join-of-deltas}, we know that
\[
\Delta(\alpha_0,\dots,\alpha_{n-2},\rho_1\join \rho_2) = 
  \Delta(\alpha_0,\dots,\alpha_{n-2},\rho_1) \join
  \Delta(\alpha_0,\dots,\alpha_{n-2},\rho_2).
\]
Since in Mal'cev algebras $\alpha\circ\beta = \Sg(\alpha\cup\beta)$ for
any pair of congruences $\alpha$, $\beta$, we have that for all $\tup a, \tup
b \in \Delta(\alpha_0,\dots,\alpha_{n-2})$,
\(
  \tup a \equiv_{\Delta(\alpha_0,\dots,\alpha_{n-2},\rho_1 \join \rho_2)}
    \tup b
\)
if and only if there exists $\tup c$ such that
$\tup a \equiv_{\Delta(\alpha_0,\dots,\alpha_{n-2},\rho_1)} \tup c$ and
$\tup b \equiv_{\Delta(\alpha_0,\dots,\alpha_{n-2},\rho_2)} \tup c$.

Now, we prove that
\begin{equation} \label{eq:hc7-fin}
  [\alpha_0,\dots,\alpha_{n-2},\rho_1 \join \rho_2] \leq
  [\alpha_0,\dots,\alpha_{n-2},\rho_1]
    \join [\alpha_0,\dots,\alpha_{n-2},\rho_2].
\end{equation}
Suppose that $a$ and $b$ are congruent modulo
$[\alpha_0,\dots,\alpha_{n-2},\rho_1\join \rho_2]$, hence from
Proposition~\ref{commutator-and-delta} there are $e_0,\dots,e_{2^{n-1}-2}$
such that
\[
  (e_0,\dots,e_{2^{n-1}-2},a)
    \equiv_{\Delta(\alpha_0,\dots,\alpha_{n-2},\rho_1\join \rho_2)}
  (e_0,\dots,e_{2^{n-1}-2},b).
\]
From the above observation, we know that there is a~tuple
$\tup c \in \Delta(\alpha_0,\dots,\alpha_{n-2})$ such that
\begin{equation} \label{eq:one}
  \tup c \equiv_{\Delta(\alpha_0,\dots,\alpha_{n-2},\rho_1)}
    (e_0,\dots,e_{2^{n-1}-2},a)
\end{equation}
and
\begin{equation} \label{eq:two}
  \tup c \equiv_{\Delta(\alpha_0,\dots,\alpha_{n-2},\rho_2)}
    (e_0,\dots,e_{2^{n-1}-2},b)
\end{equation}
If we use Lemma~\ref{delta-contains-graph-of-cube-term} for (\ref{eq:one}), we
get that
\[
  a \equiv_{[\alpha_0,\dots,\alpha_{n-2},\rho_1]}
    q_n(c_0,\dots,c_{2^{n-1}-1},e_0,\dots,e_{2^{n-1}-2}); 
\]
similarly for (\ref{eq:two}), we get that
\[
  b \equiv_{[\alpha_0,\dots,\alpha_{n-2},\rho_2]}
    q_n(c_0,\dots,c_{2^{n-1}-1},e_0,\dots,e_{2^{n-1}-2}). 
\]
Altogether, $a$ and $b$ are congruent modulo
$[\alpha_0,\dots,\alpha_{n-2},\rho_1]
  \join [\alpha_0,\dots,\alpha_{n-2},\rho_2]$.
Which completes the proof
of~(\ref{eq:hc7-fin}). The other inclusion is given by (HC2).
\end{proof}

\begin{proposition}[HC7, {\cite[Lemma 6.7]{am10}}]
  Let $\al A$ be a~Mal'cev algebra with congruences
  $\alpha_0,\dots,\alpha_{n-2}$, and $\rho_i$, $i\in I$ for $I$ non-empty set.
  Then
  \[
    [\alpha_0,\dots,\alpha_{n-2},\bigjoin_{i\in I} \rho_i]
    = \bigjoin_{i\in I} [\alpha_0,\dots,\alpha_{n-2},\rho_i].
  \]
\end{proposition}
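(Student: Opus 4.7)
The plan is to deduce the infinite version of HC7 by combining the two preceding lemmas, which already do almost all of the work. Lemma~\ref{commutator-is-continuous} reduces computing the commutator with an arbitrary join $\bigjoin_{i\in I}\rho_i$ to computing commutators with joins over finite subsets of $I$, and Lemma~\ref{hc7-finite-case} distributes the commutator over any finite join.

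Concretely, I would start by applying Lemma~\ref{commutator-is-continuous} to rewrite the left-hand side as
\[
  [\alpha_0,\dots,\alpha_{n-2},\bigjoin_{i\in I} \rho_i] =
  \bigjoin_{\{i_0,\dots,i_{k-1}\} \subseteq I}
    [\alpha_0,\dots,\alpha_{n-2},\bigjoin_{j<k} \rho_{i_j}],
\]
where the outer join ranges over all finite subsets of $I$. Then, for each fixed finite subset $\{i_0,\dots,i_{k-1}\}$, Lemma~\ref{hc7-finite-case} (applied inductively on $k$) gives
\[
  [\alpha_0,\dots,\alpha_{n-2},\bigjoin_{j<k} \rho_{i_j}]
    = \bigjoin_{j<k} [\alpha_0,\dots,\alpha_{n-2},\rho_{i_j}].
\]
Substituting this into the previous display and observing that the resulting double join over finite subsets $\{i_0,\dots,i_{k-1}\}$ and their members $j<k$ collapses to the single join $\bigjoin_{i\in I} [\alpha_0,\dots,\alpha_{n-2},\rho_i]$ yields the inequality $\leq$.

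The reverse inequality $\geq$ is immediate from (HC2): for each $i \in I$ we have $\rho_i \leq \bigjoin_{j\in I}\rho_j$, hence $[\alpha_0,\dots,\alpha_{n-2},\rho_i] \leq [\alpha_0,\dots,\alpha_{n-2},\bigjoin_{j\in I}\rho_j]$, and taking the join over $i \in I$ gives the desired bound. I do not expect any real obstacle here, since all of the actual work (the reduction to finite joins, which uses the cube term, and the binary distributivity step using $\alpha\circ\beta = \Sg(\alpha\cup\beta)$) has already been carried out in Lemmas~\ref{commutator-is-continuous} and~\ref{hc7-finite-case}; this proposition is essentially a bookkeeping step.
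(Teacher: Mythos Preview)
Your proposal is correct and follows essentially the same route as the paper: apply Lemma~\ref{commutator-is-continuous} to reduce to finite subsets of $I$, then use Lemma~\ref{hc7-finite-case} on each finite join, and collapse the resulting double join. The only difference is cosmetic: since both lemmas already give equalities, the chain yields equality directly, so your separate verification of the $\geq$ direction via (HC2) is redundant (though not wrong).
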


\begin{proof}
If $I$ is a~finite set then the proposition is given by
Lemma~\ref{hc7-finite-case}. If $I$ is infinite, we can first use
Lemma~\ref{commutator-is-continuous} to get
\[
  [\alpha_0,\dots,\alpha_{n-2},\bigjoin_{i\in I} \rho_i]
  = \bigjoin_{\{i_0,\dots,i_{k-1}\} \subseteq I}
    [\alpha_0,\dots,\alpha_{n-2},
      \bigjoin_{i\in \{i_0,\dots,i_{k-1}\}} \rho_i].
\]
Then by using the finite case, the right hand side is equal to
\[
  \bigjoin_{\substack{{\{i_0,\dots,i_{k-1}\} \subseteq I}, \\
  {i\in \{i_0,\dots,i_{k-1}\}}}}
    [\alpha_0,\dots,\alpha_{n-2}, \rho_i]
    = \bigjoin_{i\in I} [\alpha_0,\dots,\alpha_{n-2}, \rho_i]. \qedhere
\]
\end{proof}

\begin{proposition}[HC8, {\cite[Proposition 6.14]{am10}}]
Let $\al A$ be a~Mal'cev algebra with congruences
$\alpha_0,\dots,\alpha_{n-1}$, and $i \in \{1,\dots,n-1\}$. Then
\[
  [[\alpha_0,\dots,\alpha_{i-1}],\alpha_i,\dots,\alpha_{n-1}] \leq
    [\alpha_0,\dots,\alpha_{n-1}].
\]
\end{proposition}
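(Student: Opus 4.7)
Set $\beta = [\alpha_0, \dots, \alpha_{i-1}]$ and $m = n - i + 1$. I prove the inclusion pointwise: given $(a,b) \in [\beta, \alpha_i, \dots, \alpha_{n-1}]$, I will construct a witness showing $(a,b) \in [\alpha_0, \dots, \alpha_{n-1}]$, exchanging the two commutators for $\Delta$-memberships via Proposition~\ref{commutator-and-delta}. By the equivalence of (2) and (4) in that proposition, the hypothesis becomes $(a, \dots, a, b) \in \Delta(\beta, \alpha_i, \dots, \alpha_{n-1})$ (length $2^m$), and it is enough to produce the tuple $(a, \dots, a, b) \in \Delta(\alpha_0, \dots, \alpha_{n-1})$ of length $2^n$, since applying the same proposition in reverse will yield $(a,b) \in [\alpha_0, \dots, \alpha_{n-1}]$.

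The construction uses Lemma~\ref{term-description-of-delta}, which provides a term $t$ of $\al A$ and pairs $\tup v_0 \equiv_\beta \tup w_0$, $\tup v_j \equiv_{\alpha_{i+j-1}} \tup w_j$ for $j = 1, \dots, m-1$, such that the $k'$-th entry of the starting tuple equals
\[
  t\bigl((\tup v_0, \tup w_0)_{(k'_{(0)})},\, (\tup v_1, \tup w_1)_{(k'_{(1)})},\, \dots,\, (\tup v_{m-1}, \tup w_{m-1})_{(k'_{(m-1)})}\bigr).
\]
Each pair with $j \geq 1$ lifts directly to the generator $\tup c_{i+j-1}^n(\tup v_j, \tup w_j) \in \Delta(\alpha_0, \dots, \alpha_{n-1})$, which takes value $\tup v_j$ where $k_{(i+j-1)} = 0$ and $\tup w_j$ where $k_{(i+j-1)} = 1$. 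The $\beta$-pair requires more care: for each coordinate~$\ell$, $(v_{0,\ell}, w_{0,\ell}) \in \beta$ yields $(v_{0,\ell}, \dots, v_{0,\ell}, w_{0,\ell}) \in \Delta(\alpha_0, \dots, \alpha_{i-1})$ by Proposition~\ref{commutator-and-delta}, and iterating Lemma~\ref{delta-is-a-congruence}.ii along each of the $n - i$ new directions $i, i+1, \dots, n-1$ (each step doubling the tuple by duplication across the new direction) gives a $2^n$-tuple $\tup X^{(\ell)} \in \Delta(\alpha_0, \dots, \alpha_{n-1})$ whose entry at~$k$ is $w_{0,\ell}$ precisely when $k_{(0)} = k_{(1)} = \dots = k_{(i-1)} = 1$, and $v_{0,\ell}$ otherwise.

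Applying $t$ coordinatewise to the $\tup X^{(\ell)}$'s and the generators $\tup c_{i+j-1}^n(\tup v_j, \tup w_j)$ produces a tuple $\tup Z \in \Delta(\alpha_0, \dots, \alpha_{n-1})$. A direct substitution shows that $Z_k$ equals the $k'$-th entry of the original tuple at $k' = (k_{(0)} \btimes \dots \btimes k_{(i-1)},\ k_{(i)}, \dots, k_{(n-1)})$. Since $k' = 2^m - 1$ if and only if $k = 2^n - 1$, the tuple $\tup Z$ is exactly $(a, \dots, a, b)$, and Proposition~\ref{commutator-and-delta} closes the argument. The main design insight, and the main obstacle, is the choice of coordinate map: collapsing the first $i$ bits by their bitwise AND is what ensures that only the corner $k = 2^n - 1$ ends up with $\tup w_0$, keeping all other corners at $\tup v_0$ and producing the desired shape $(a, \dots, a, b)$.
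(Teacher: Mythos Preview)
Your proof is correct and follows essentially the same route as the paper's: both hinge on the coordinate map that collapses the first $i$ bits of $k$ by their bitwise AND and keeps the remaining bits, lift the $\beta$-generators into $\Delta(\alpha_0,\dots,\alpha_{n-1})$ via Proposition~\ref{commutator-and-delta} together with iterated Lemma~\ref{delta-is-a-congruence}(ii), and lift the $\alpha_{i+j-1}$-generators directly. The only difference is packaging: the paper proves the relational containment $\Delta([\alpha_0,\dots,\alpha_{i-1}],\alpha_i,\dots,\alpha_{n-1})^{e_n}\leq\Delta(\alpha_0,\dots,\alpha_{n-1})$ on generators and then invokes Lemma~\ref{forks-of-projections}, whereas you unpack a single element $(a,\dots,a,b)$ through Lemma~\ref{term-description-of-delta} and reassemble it---but the mathematical content is the same.
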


\begin{proof}
For $m\geq i$, define the map $e_m\colon 2^m \to 2^{m-i+1}$ by
\(
  \tup k \mapsto k'k_{i}\dots k_{m-1}
\)
where $k' = k_0 \cdot k_1 \cdot \hdots \cdot k_{i-1}$.
We claim that
\begin{equation} \label{eq:hc8}
  \Delta([\alpha_0,\dots,\alpha_{i-1}],\alpha_i,\dots,\alpha_{m-1}) ^ {e_m}
    \leq \Delta(\alpha_0,\dots,\alpha_{m-1}).
\end{equation}
Because
$\Delta([\alpha_0,\dots,\alpha_{i-1}],\alpha_i,\dots,\alpha_{m-1})^{e_m}$ is
clearly a~subuniverse of $\al A^{\!2^m}$ generated by the set
\[
  (\cube_0^{m-i+1} [\alpha_0,\dots,\alpha_{i-1}]) ^ {e_m}
    \cup \bigcup_{j<m-i} (\cube_{j+1}^{m-i+1} \alpha_{i+j}) ^ {e_m},
\]
it suffices to prove that this is a~subset of
$\Delta(\alpha_0,\dots,\alpha_{m-1})$.  The inclusions
\[
  (\cube_{j+1}^{m-i+1} \alpha_{i+j}) ^ {e_m}
    \subseteq \Delta(\alpha_0,\dots,\alpha_{n-1})
\]
are consequences of the fact that $e_m(\tup k)(j+1) = k_{j+i}$ for all
$j<m-i$.  The other inclusion, $(\cube_0^{m-i+1}[\alpha_0,\dots,\alpha_{i-1}])
^ {e_m} \subseteq \Delta(\alpha_0,\dots,\alpha_{m-1})$, can be proved by
induction on $m$. If $m=i$ then
\[
  (\cube_0^1 [\alpha_0,\dots,\alpha_{i-1}]) ^ {e_m}
  = \{ (a,\dots,a,b) \midbar a \equiv_{[\alpha_0,\dots,\alpha_{i-1}]} b \},
\]
and all the elements of~this set are in~$\Delta(\alpha_0,\dots,\alpha_{i-1})$
from Proposition~\ref{commutator-and-delta}. For the induction step, observe
that
\begin{multline*}
  \face_m^0 \bigl((\cube_0^{(m+1)-i+1} (a,b)) ^ {e_{m+1}}\bigr)
    = \face_m^1 \bigl((\cube_0^{(m+1)-i+1} (a,b)) ^ {e_{m+1}}\bigr) \\
    = \left(\cube_0^{m-i+1} (a,b)\right) ^ {e_{m}}.
\end{multline*}
So the inclusion follows from Lemma~\ref{delta-is-a-congruence}(ii).
Finally, from (\ref{eq:hc8}) for $m=n$, the fact that $e_n(\tup k) = 1\dots 1$
if and only if $\tup k = 1\dots 1$, and Lemma \ref{forks-of-projections} we get
\[
  \psi_{1\dots 1}
    (\Delta([\alpha_0,\dots,\alpha_{i-1}],\alpha_i,\dots,\alpha_{n-1}))
  \leq \psi_{1\dots 1} (\Delta(\alpha_0,\dots,\alpha_{n-1})).
\]
The rest is Theorem \ref{description-of-higher-commutators}.
\end{proof}
 
\subsection*{Acknowledgements}

The author would like to thank Erhard Aichinger for many hours of fruitful
discussions and many helpful comments on the manuscript.


\begin{thebibliography}{10}

\bibitem{amm14}
Erhard Aichinger, Peter Mayr, and Ralph McKenzie.
\newblock On the number of finite algebraic structures.
\newblock {\em J. Eur. Math. Soc. (JEMS)}, 16(8):1673--1686, 2014.

\bibitem{am10}
Erhard Aichinger and Neboj{\v{s}}a Mudrinski.
\newblock Some applications of higher commutators in {M}al'cev algebras.
\newblock {\em Algebra Universalis}, 63(4):367--403, 2010.

\bibitem{am13a}
Erhard Aichinger and Neboj{\v{s}}a Mudrinski.
\newblock On various concepts of nilpotence for expansions of groups.
\newblock {\em Publ. Math. Debrecen}, 83(4):583--604, 2013.

\bibitem{bim+10}
Joel Berman, Pawe{\l} Idziak, Petar Markovi{\'c}, Ralph McKenzie, Matthew
  Valeriote, and Ross Willard.
\newblock Varieties with few subalgebras of powers.
\newblock {\em Trans. Amer. Math. Soc.}, 362(3):1445--1473, 2010.

\bibitem{bul01}
Andrei Bulatov.
\newblock On the number of finite {M}al'tsev algebras.
\newblock In {\em Contributions to general algebra, 13 ({V}elk\'e {K}arlovice,
  1999/{D}resden, 2000)}, pages 41--54. Heyn, Klagenfurt, 2001.

\bibitem{bul05}
Andrei~A. Bulatov.
\newblock Three-element {M}al'tsev algebras.
\newblock {\em Acta Sci. Math. (Szeged)}, 71(3-4):469--500, 2005.

\bibitem{bi03}
Andrei~A. Bulatov and Pawe{\l}~M. Idziak.
\newblock Counting {M}al'tsev clones on small sets.
\newblock {\em Discrete Math.}, 268(1--3):59--80, 2003.

\bibitem{fm87}
Ralph Freese and Ralph McKenzie.
\newblock {\em Commutator theory for congruence modular varieties}, volume 125
  of {\em London Mathematical Society Lecture Note Series}.
\newblock Cambridge University Press, Cambridge, 1987.

\bibitem{gp08}
Martin Goldstern and Michael Pinsker.
\newblock A survey of clones on infinite sets.
\newblock {\em Algebra Universalis}, 59(3-4):365--403, 2008.

\bibitem{idz99}
Pawe{\l}~M. Idziak.
\newblock Clones containing {M}al'tsev operations.
\newblock {\em Internat. J. Algebra Comput.}, 9(2):213--226, 1999.

\bibitem{mud09}
Neboj{\v{s}}a Mudrinski.
\newblock {\em On Polynomials in {M}al'cev Algebras}.
\newblock PhD thesis, University of Novi Sad, 2009.

\bibitem{rom77}
B.A. Romov.
\newblock Galois correspondence between iterative post algebras and relations
  on an infinite set.
\newblock {\em Cybernetics}, 13(3):377--379, 1977.

\bibitem{sha14}
Jason Shaw.
\newblock Commutator relations and the clones of finite groups.
\newblock {\em Algebra Universalis}, 72(1):29--52, 2014.

\end{thebibliography}
\end{document}